\def\C{\mathbb{C}}
\def\CD{\mathcal{D}}
\def\CF{\mathcal{F}}
\def\CH{\mathcal{H}}
\def\CO{\mathcal{O}}
\let\d=\partial
\DeclareMathOperator{\diag}{diag}
\def\DT{{\Delta t}}
\def\DX{{\Delta x}}
\def\e{\mathrm{e}}
\def\E{\mathbb{E}}
\let\eps=\varepsilon
\def\eqdef{\stackrel{\mbox{\tiny def}}{=}}
\def\eref#1{(\ref{#1})}
\let\la=\langle
\def\quark{\setbox0\hbox{$x$}\hbox to\wd0{\hss$\cdot$\hss}}
\def\R{\mathbb{R}}
\let\ra=\rangle
\def\Z{\mathbb{Z}}
\newtheorem{conjecture}{Conjecture}
\title{Approximations to the Stochastic Burgers Equation}
\author{Martin Hairer, Jochen Voss}
\date{24th May 2010}
\begin{document}

\maketitle

\begin{abstract}
  This article is devoted to the numerical study of various finite
  difference approximations to the stochastic Burgers equation. Of
  particular interest in the one-dimensional case is the situation
  where the driving noise is white both in space and in time.  We
  demonstrate that in this case, different finite difference schemes
  converge to different limiting processes as the mesh size tends to
  zero. A theoretical explanation of this phenomenon is given and we
  formulate a number of conjectures for more general classes of
  equations, supported by numerical evidence.\\[1em]
{\textit{Keywords:} stochastic Burgers equation, correction term, numerical approximation}\\
{\textit{Subject classification:} 60H35, 60H15, 35K55}
\end{abstract}

\section{Introduction}

This article studies several finite
difference schemes for the viscous stochastic Burgers equation:
\begin{equ}[e:BurgersBasic]
  \d_t u(x,t)
    = \nu\,\d_x^2 u - g(u)\,\d_x u + \sigma\,\xi(x,t),
    \qquad x\in [0,2\pi],\quad t\ge 0.
\end{equ}
In this equation, $\xi$ denotes space-time white noise, that is the
centred, distribution-valued Gaussian random variable such that
$\E\bigl(\xi(x,t) \xi(y,s)\bigr) = \delta(t-s) \delta(x-y)$. We will
always endow this equation with periodic boundary conditions and we
will consider solutions $u$ taking values either in $\R$ or in $\R^n$
(in which case $g$ is matrix-valued in general).

Motivations for studying the stochastic Burgers equation are manifold.
Just to name a few, it is used to model vortex lines in
high-temperature superconductors \cite{BFGL}, dislocations in
disordered solids and kinetic roughening of interfaces in epitaxial
growth \cite{Bar96}, formation of large-scale structures in the
universe \cite{GSS85,SZ89}, constructive quantum field theory
\cite{BCJ94}, \textit{etc}.  Since in the case $\sigma = 0$ and $g(u)
\propto u$ this equation is furthermore explicitly solvable via the
Hopf-Cole transform ($u = \d_x v/v$, where $v$ solves the heat equation),
it comes as no surprise that a wealth of numerical and analytical
results are available.  From a purely mathematical point of view, let
us mention for example the well-posedness results from
\cite{BCF91,BCJ94,DDT94,Gyo98,Kim06} and the ergodicity results
obtained in \cite{TwaZab,GoldMasl}.  One remarkable achievement was
the construction of a stationary solution in the inviscid limit with
non-vanishing noise \cite{EKhaSin00} (dissipation then occurs purely
through shocks).  From a more quantitative perspective, the scaling
exponents of the solutions in the small viscosity limit have attracted
considerable interest, both in the physics and the applied mathematics
literature
\cite{PhysRevLett.77.3118,PhysRevE.54.5116,Kraich,EVan00,EVan00Stat}.

The white noise term $\xi$ in~\eref{e:BurgersBasic} leads to solutions
$u$ which, in general, will be very ``rough''.  In particular, $u$ as
a function of the spatial variable~$x$ will not be differentiable in
the classical sense, but will only possess some H\"older regularity.
As a consequence, it transpires that the solutions to
\eref{e:BurgersBasic} are extremely unstable under natural
approximations of the nonlinearity, and this is the phenomenon that
will be explored in this article.  For example, for any $a,b \ge 0$
with $a+b > 0$, one can consider the approximating equation
\begin{equ}[e:approx]
  \d_t u^\eps(x,t)
    = \nu\,\d_x^2 u^\eps
        - g(u^\eps)\, D_\eps u^\eps(x,t)
        + \sigma\,\xi(x,t),
\end{equ}
where the approximate derivative $D_\eps$ is defined as
\begin{equ}[e:defDe]
  D_\eps u(x,t) = {u(x + a\eps,t) - u(x-b\eps,t) \over (a+b)\eps}.
\end{equ}
In the absence of the noise term $\xi$ it would be a standard exercise
in numerical analysis to show that the solution of \eref{e:approx}
converges to the solution of \eref{e:BurgersBasic} as $\eps \to
0$. This is just an example of the widely accepted `folklore' fact
that, if an equation is well-posed, any `reasonable' approximation
will converge to the exact solution.\footnote{Remember that we are
  working at fixed non-zero viscosity here, so there is no ambiguity in the
  concept of solution and we do not require an
  upwind scheme in order to obtain convergence.}

In this article, we will argue that, if $\xi$ is taken to be space-time
white noise, the limit of \eref{e:approx} as $\eps \to 0$ depends on
the values $a$ and $b$ and is equal to \eref{e:BurgersBasic}
only if either $g$ is constant or $a = b$!  Furthermore, it will
follow from the argument that, if one considers driving noise that is
slightly rougher than space-time white noise (taking a noise term
equal to $(1-\d_x^2)^{\alpha}\,dw(t)$ with $\alpha \in(0,1/4)$ still
yields a well-posed equation), one does not expect solutions to
the approximate equation \eref{e:approx} to converge to anything at
all, unless $a =b$.  Our methodology here is to first present a
heuristic argument which allows to derive quantitative predictions for
the effect of the finite difference discretisation on the solution.
We will then use numerical experiments to verify these predictions.

At this point we would like to emphasise that the aim of this article
is certainly not to advocate the use of a finite difference scheme of
the type \eref{e:approx} to effectively simulate
\eref{e:BurgersBasic}. Indeed, we will show in
section~\ref{sec:regular} below that approximations of the
nonlinearity of the type $D_\eps G(u)$, where $G$ is the
antiderivative of $g$ already have much better stability properties.
Instead, our aim is merely to give a striking illustration of the fact
that caution should be exercised in the simulation of stochastic PDEs
driven by spatially rough noise.

The text is structured as follows: We start in
section~\ref{sec:correction} by presenting our argument for the case
of the stochastic Burgers equation, \textit{i.e.}\ for $g(u)=u$.  In
section~\ref{S:general} we will present the corresponding results for
more general equations and in section~\ref{S:viscosity limit} we study
the limit of vanishing noise and viscosity.  Finally, in
appendix~\ref{S:simulate} we discuss some technical aspects
of the simulations used throughout this article.

\subsection*{Acknowledgements}

We would like to thank Andrew Majda, Andrew Stuart, Eric Vanden
Eijnden, and Sam Falle for helpful discussions of the phenomena
discussed in this article.  Financial support was kindly provided by
the EPSRC through grants EP/E002269/1 and EP/D071593/1, as well as by
the Royal Society through a Wolfson Research Merit Award.  JV would
like to thank the Courant Institute, where work on this article was
started, for its hospitality.


\section{Stochastic Burgers Equation}
\label{sec:correction}

In this section we consider the stochastic Burgers equation
\begin{equ}[e:Burgers]
  du = \nu\,\d_x^2 u\,dt - u\,\d_x u\,dt + \sigma\,dw,
\end{equ}
as well as the approximation given by
\begin{equ}[e:BurgersApprox]
  du^\eps = \nu\,\d_x^2 u^\eps\,dt - u^\eps\,D_\eps u^\eps\,dt + \sigma\,dw.
\end{equ}
The solutions $u$ and $u^\eps$ take values in the space
$L^2\bigl([0,2\pi],\R\bigr)$ on which the operator $\d_x^2$ is
endowed with periodic boundary conditions, and $w$ is an
$L^2$-cylindrical Wiener process (see \cite{ZDP} for details).
Equation~\eref{e:Burgers} is well-posed, since we can rewrite $u\,\d_x
u$ as ${1\over 2}\d_x(u^2)$ which is locally Lipschitz from the
Sobolev space $H^{1/4}$ into $H^{-1}$, thus allowing us to apply
general local well-posedness theorems as in \cite{ZDP,SPDENotes}.  For
fixed time $t>0$, the solutions to \eref{e:Burgers} have the
regularity of Brownian motion when viewed as a function of the
spatial variable $x$.  In particular, they are not differentiable
in~$x$.  Figure~\ref{fig:solutions} shows numerical solutions of
equation~\eref{e:BurgersApprox} for different values of $a$ and~$b$.
One can see that different choices for these parameters lead to an
$\CO(1)$ difference in the solutions.

Our aim in this article is to understand and quantify these
differences. In particular, in conjecture~\ref{C1} below, we compute a
correction term to \eref{e:Burgers} and we verify numerically that the
solutions to \eref{e:BurgersApprox} converge to the corrected
equation as $\eps \to 0$.  This understanding will then allow us to
conjecture the appearance of similar correction terms in more
complicated situations and we will again verify these conjectures
numerically.

\subsection{Heuristic Explanation}

\begin{figure}
  \begin{center}
    \includegraphics{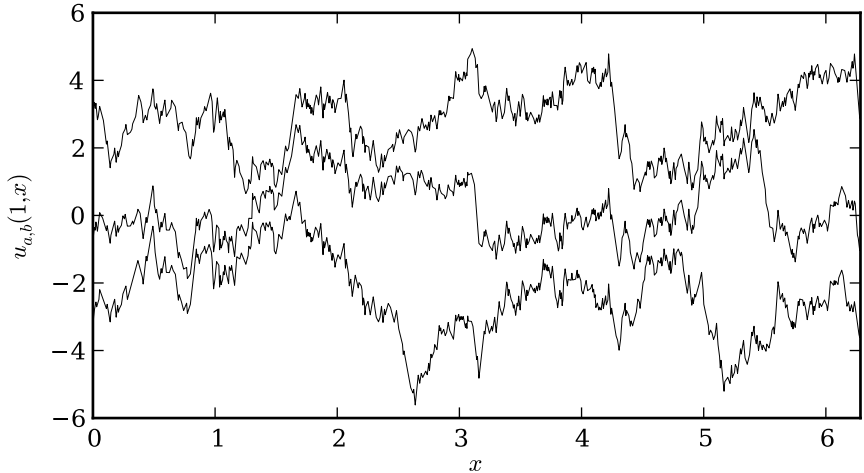}
  \end{center}
  \caption[discretisation error for the FD
  method]{\label{fig:solutions}\it Illustration of the discretisation
    error for the finite difference method~\eref{e:BurgersApprox}.
    The three lines correspond to a right-sided discretisation ($a=1$,
    $b=0$; top-most curve), a centred discretisation ($a=1$, $b=1$; middle
    curve) and a left-sided discretisation ($a=0$, $b=1$; bottom-most
    curve), all computed using the same instance of the driving noise.  The
    picture clearly shows that there is an $\CO(1)$ difference between
    the solutions obtained by the three different discretisation
    schemes.  From the argument presented in the text, we assume that
    the exact solution of~\eref{e:Burgers} will be closest to the
    middle of the three lines.}
\end{figure}

For simplicity, instead of $u$ we consider the solution $v$ to the
stochastic heat equation
\begin{equ}[e:heat]
  dv = \nu\,\d_x^2 v\,dt  + \sigma\,dw(t).
\end{equ}
Since the properties of the discretisation of differential operators
only depend on local properties, and since $v$ has the same spatial
regularity as $u$, it will be sufficient to study how well $v\, D_\eps
v$ approximates $v\,\d_x v = {1\over 2} \d_x v^2$.

By expressing $v$ in the Fourier basis
$\bigl\{\e^{inx}/\sqrt{2\pi}\bigr\}_{n\in\Z}$ it is easy to check
that the stationary solution to \eref{e:heat} is
\begin{equ}
  v(t,x)
  = \sum_{n \in \Z\setminus\{0\}}
    \frac{\sigma}{in\sqrt{2\nu}} \xi_n(t) \frac{\e^{inx}}{\sqrt{2\pi}}
    + \xi_0(t) \frac{1}{\sqrt{2\pi}},
\end{equ}
where the $\xi_0$ is a (real-valued) standard Brownian motion and
$\xi_n$ for $n\neq 0$ are complex-valued Ornstein-Uhlenbeck processes
with variance $1$ (in the sense that $\E |\xi_n(t)|^2 = 1$) and time
constant $\nu n^2$ which are independent, except for the condition
that $\xi_{-n} = \bar \xi_n$.  Therefore, the derivative of $v$ is
given (at least formally) by
\begin{equ}[e:exactv]
  \d_x v(x) = \sum_{n\neq 0} \frac{\sigma \xi_n(t) \e^{inx}}{2\sqrt{\nu\pi}}.
\end{equ}
The $\eps$-approximation to the derivative given in \eref{e:defDe},
on the other hand, is given by
\begin{equ}[e:approxv]
  D_\eps v(x)
  = \sum_{n \neq 0} {\sigma \xi_n(t) \e^{inx} \over 2\sqrt{\nu\pi}}
        {\e^{ina\eps} - \e^{-inb\eps}\over (a+b)i\eps n}.
\end{equ}
It is clear that the terms in \eref{e:approxv} are a good
approximation to the terms in \eref{e:exactv} only up to $n \approx
\eps^{-1}$.  For larger $n$, the multiplier in \eref{e:approxv} will
decrease like $n^{-1}$.

For our analysis we restrict ourselves to the constant ($n=0$) Fourier
mode.  Our numerical experiments, below, show that the contributions
from this mode are already enough to explain the observed differences
between the solutions of the approximating
equation~\eref{e:BurgersApprox} and the exact solution.  Since
$v\,\d_x v$ is a total derivative, the $0$-mode of this term vanishes.
In contrast, the $0$-mode of $v\, D_\eps v$ does not vanish at all: We
obtain instead for this term the sum
\begin{equs}[e:contrib]
  \bigl\la v D_\eps v\bigm| \frac{1}{\sqrt{2\pi}} \bigr\ra
  &= \frac{1}{\sqrt{2\pi}} \sum_{n\neq 0} \frac{\sigma^2 \xi_{-n}(t) \xi_n(t)}{2\nu (-in)}
       {\e^{ina\eps} - \e^{-inb\eps}\over (a+b)\eps in} \\
  &= \frac{\sigma^2}{\sqrt{2\pi}\nu} \sum_{n>0} |\xi_n(t)|^2
        {\cos a\eps n - \cos b\eps n \over (a+b)\eps n^2}
\end{equs}
and the expectation of this expression, as $\eps \to 0$, converges
to
\begin{equs}
  \lim_{\eps\downarrow0} \E\bigl( -v D_\eps v\bigr)(x)
   &= \lim_{\eps\downarrow0}
      \E\bigl\la -v D_\eps v\bigm| \frac{1}{\sqrt{2\pi}} \bigr\ra \frac{1}{\sqrt{2\pi}} \\
  &= - {\sigma^2\over 2\pi\nu} \int_0^\infty {\cos a\kappa - \cos b\kappa \over (a+b)\kappa^2}\,d\kappa
  = {\sigma^2 \over 4\nu} {a-b \over a+b}.
\end{equs}
As a consequence, one expects the following result.

\begin{conjecture}\label{C1}
  The solution of the approximating equation~\eref{e:BurgersApprox}
  converges, as $\eps \to 0$, to the solution of
  \begin{equ}[e:BurgersWrong]
    du = \nu\,\d_x^2 u\,dt - u\,\d_x u\,dt
           + \frac{\sigma^2}{4\nu}\frac{a-b}{a+b}\,dt + \sigma\, dw.
  \end{equ}
  Thus, the approximation converges to the stochastic Burgers
  equation~\eref{e:Burgers} only for $a = b$.
\end{conjecture}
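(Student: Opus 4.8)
The plan is to make rigorous the heuristic computation by controlling the difference between the nonlinearities $u^\eps D_\eps u^\eps$ and $u\,\d_x u$ along solution paths, and isolating the constant-in-$x$ part that survives in the limit. First I would set up a common probability space carrying the cylindrical Wiener process $w$ and work with the mild (stochastic convolution) formulation of both \eref{e:Burgers} and \eref{e:BurgersApprox}, together with the linearisation $v$ solving \eref{e:heat} started from the stationary distribution. Writing $u^\eps = v + \rho^\eps$ and $u = v + \rho$, the key is that $\rho^\eps$ and $\rho$ enjoy better spatial regularity (they live in $H^{s}$ for some $s>1/2$), so that all products involving at least one factor of $\rho$ or $\rho^\eps$ can be handled by standard multiplicative inequalities in Sobolev spaces and converge to the ``correct'' limit. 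The genuinely singular contribution comes only from the term $v\,D_\eps v$, which must be compared with $\hf\d_x(v^2)$.

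Next I would decompose $v\,D_\eps v - \hf \d_x(v^2)$ into (i) its projection onto the constant Fourier mode and (ii) the remainder. For part (i), the explicit Fourier series \eref{e:contrib} shows that the $0$-mode is $\frac{\sigma^2}{\sqrt{2\pi}\,\nu}\sum_{n>0}|\xi_n(t)|^2\,\frac{\cos a\eps n-\cos b\eps n}{(a+b)\eps n^2}$; here I would prove an $L^2(\Omega)$ (in fact pathwise, after a time average) law-of-large-numbers statement: since $\E|\xi_n|^2=1$, $\mathrm{Var}|\xi_n|^2$ is bounded, and the $\xi_n$ are independent, the random sum concentrates around its mean, which by the Riemann-sum-to-integral argument already given converges to $\frac{\sigma^2}{4\nu}\frac{a-b}{a+b}$. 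The fluctuations around this mean vanish in $L^2(\Omega)$ because $\sum_n \bigl(\frac{\cos a\eps n-\cos b\eps n}{(a+b)\eps n^2}\bigr)^2 \to 0$ as $\eps\to0$. For part (ii), the nonconstant modes, one shows that the high-frequency tail (multiplier decaying like $n^{-1}$ for $n\gtrsim \eps^{-1}$) contributes something that tends to $0$ in a negative Sobolev norm $H^{-\beta}$, $\beta$ small, so that after convolution with the heat semigroup it is negligible; this is where the oscillation $\e^{ina\eps}-\e^{-inb\eps}$ rather than its modulus matters, giving cancellation in the off-diagonal ($m\neq -n$) terms.

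Finally I would close a Gronwall-type estimate: subtracting the mild formulations of $u^\eps$ and of the corrected equation \eref{e:BurgersWrong}, one gets $u^\eps - u = \int_0^t S(t-s)\bigl(\mathcal{N}^\eps(u^\eps)-\mathcal{N}(u) - c_{a,b}\bigr)\,ds$ with $c_{a,b}=\frac{\sigma^2}{4\nu}\frac{a-b}{a+b}$, where $S$ is the heat semigroup; splitting the integrand using the decomposition above, the ``good'' parts are locally Lipschitz and absorbed by Gronwall, while the ``singular'' part converges to zero by the two estimates just described, yielding $u^\eps\to u$ in $C([0,T],H^{\gamma})$ for suitable $\gamma$, up to a stopping time that can be sent to the explosion time by the a~priori bounds inherited from well-posedness of \eref{e:BurgersWrong}. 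The main obstacle I anticipate is part (ii): controlling the nonconstant-mode remainder uniformly in $\eps$ is delicate because $v$ itself is only of Brownian regularity in $x$, so the product $v\,D_\eps v$ barely makes sense and one must exploit the stochastic cancellations (Wick-type / second-chaos estimates for the off-diagonal terms) rather than deterministic Sobolev bounds — this is presumably exactly why the statement is phrased as a conjecture supported by numerics rather than a theorem.
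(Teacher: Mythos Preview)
The paper does not prove Conjecture~\ref{C1}: it is stated as a conjecture, supported only by the heuristic computation of the \emph{expectation} of the zero Fourier mode of $v\,D_\eps v$ (equation~\eref{e:contrib} and the line following it) together with numerical evidence. Your proposal is therefore not competing with a proof in the paper but attempting to supply one where the authors deliberately stop short.

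The strategy you outline --- decompose $u^\eps=v+\rho^\eps$, isolate the bilinear term $v\,D_\eps v$, prove a law of large numbers for its zero mode, control the nonzero-mode remainder by second-chaos estimates, and close with Gronwall --- is the natural route and, as far as it goes, correct. Your variance bound for the zero mode is right (the sum of squared coefficients is $\CO(\eps)$), and you correctly identify the genuinely hard step as the nonconstant-mode remainder, which cannot be handled by deterministic Sobolev inequalities since $v$ has only Brownian spatial regularity and one must exploit probabilistic cancellation in the second Wiener chaos. Two points deserve tightening before this becomes a proof: (i)~the uniform-in-$\eps$ regularity $\rho^\eps\in H^s$ for some $s>\hf$ is not automatic, because the equation for $\rho^\eps$ is forced by $v\,D_\eps v$ itself, so this has to be bootstrapped together with an a~priori bound on $v\,D_\eps v$ in some $C([0,T],H^{-\beta})$; and (ii)~convergence of the zero mode in $L^2(\Omega)$ at each fixed $t$ is not quite enough to pass to the limit under the time integral in the mild formulation --- one needs a uniform-in-$t$ moment bound, obtainable for instance from hypercontractivity in the second chaos. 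These are exactly the technical obstacles that explain why the paper leaves the statement as a conjecture.
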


\begin{remark}
  The solution to the stochastic Burgers equation (or rather the
  integrated process which solves the corresponding KPZ equation)
  arising as the fluctuations process in the weakly asymmetric
  exclusion process \cite{BertGia97,BQS09} is driven by the
  \textit{derivative} of space-time white noise. As a consequence, it
  does \textit{not} solve an SPDE that is well-posed in the classical
  sense and can currently only be defined via the Hopf-Cole transform.
  Such a process is even rougher (by ``one derivative'') than the
  process considered here and one would expect the `wrong' numerical
  approximation schemes to fail there in an even more spectacular way.
\end{remark}

\begin{remark}
  One may think of two reasons why the correction term vanishes when
  $a=b$. On the one hand, this discretisation is more symmetric. On
  the other hand, it yields a second-order approximation to the
  derivative at $x$. The correct explanation is closer to the first
  one. Indeed, consider for example the discretisation
  \begin{equ}[e:nonsym]
    \bigl( \tilde D_\eps u\bigr)(x)
    \approx \frac{c\, u(x+2\eps) + (1-3c)\, u(x+\eps) + 3c\, u(x) - (1+c)\, u(x-\eps)}{2\eps}.
  \end{equ}
  This discretisation is of second order for every $c\in\R$. However,
  if we perform the same calculation as above with this
  discretisation, we obtain a correction term equal to
  \begin{equ}
    {\sigma^2\over 2 \pi\nu} \int_0^\infty {c\cos 2\kappa -4c\cos \kappa + 3c \over 2\kappa^2}\,d\kappa
    =  - {c\sigma^2 \over 8\nu},
  \end{equ}
  which vanishes only if $c=0$, \textit{i.e.}\ when \eref{e:nonsym} conincides
  with the symmetric discretisation~\eref{e:defDe}.
\end{remark}

In the above calculation, both the limiting equation and the
approximating equation live in the same space.  It is possible to
carry out a similar analysis in the case where the approximating
equation takes values in a different space, typically $\R^N$ for some
large $N$.  For example, we can consider the `finite difference'
approximation given by
\begin{equs}[e:burgersFD]
  \d_x^2 u &\approx \frac{u(x+\delta) - 2 u(x) + u(x-\delta)}{\delta^2}
            \eqdef \bigl(\Delta_N u\bigr)(x) \\
  u\,\d_x u &\approx u\, D_\delta u
            =  u(x) \frac{u(x + \delta) - u(x)}{\delta}
            \eqdef F_N (u)(x),
\end{equs}
where we set $\delta = {2\pi \over N}$ for the approximation with $N$
gridpoints. In this setting, we approximate $u$ by $u^N \in \R^N$ with
$u^N_j \approx u(j\delta)$.  The natural candidate for the
approximation of space-time white noise is then given by $dW^N_j =
\delta^{-1/2}\, dW_j$, where the $W_j$ are independent,
one-dimensional standard Brownian motions.  This is the correct
scaling, since it ensures for example that for every continuous
function $g$, $\sum_j W^N_j g(\delta j)$ is a Wiener process with
covariance $\sum_j g(\delta j)^2 \delta \approx \int g^2\,dx$.  With
this notation, we consider the approximating equation given by
\begin{equ}[e:approxN]
  du^N = \nu \Delta_N u^N \,dt - F_N(u^N)\,dt + \sigma\,dW^N(t).
\end{equ}

Let us take $N$ even for the sake of simplicity.  It is then
straightforward to check that the eigenvectors for $\Delta_N$ are
given as before by $\e^{inx}$ with $n=-{N\over 2}+1,\ldots,{N\over
  2}$, but the corresponding eigenvalues are
\begin{equ}
  \lambda_n
  = {2\over \delta^2} \bigl(\cos n\delta - 1\bigr)
  = - \Bigl({2\over \delta} \sin  \bigl({n \delta\over 2}\bigr)\Bigr)^2 \eqdef - \eta_n^2.
\end{equ}
(Note that for fixed $n$ and small $\delta$, one has indeed $\lambda_n
\approx - n^2$.) It then follows as previously that the solution to
the linearised equation is given by
\begin{equ}
  v(t,x)
  = \sum_{n \neq 0}
    \frac{\sigma}{2\sqrt{\nu \pi} i \eta_n} \e^{inx} \xi_n(t)
    + \xi_0(t) \frac{1}{\sqrt{2\pi}}
\end{equ}
and that its discrete derivative $D_\delta v$ is given by
\begin{equ}
  D_\delta v(t, x)
  = \sum_{n \neq 0} {\sigma \e^{inx} \xi_n(t) \over 2\sqrt{\nu \pi} i \eta_n} {\e^{in\delta} - 1\over \delta}.
\end{equ}
Note that the sums in both expressions only run over the
values $n = -{N\over 2}+1,\ldots,{N\over 2}$.
Similarly to above, we obtain that the expectation of the zero mode of
the product $v\,D_\delta v$ is given by
\begin{equ}[e:corr1/4]
  \E \bigl\la -v D_\eps v\bigm| \frac{1}{\sqrt{2\pi}} \bigr\ra \frac{1}{\sqrt{2\pi}}
  = - \frac{\sigma^2}{2\pi\nu} \sum_{n=1}^{N/2} \frac{\cos \delta n - 1}{\delta \eta_n^2}
  = \frac{\sigma^2}{2\pi\nu} \sum_{n=1}^{N/2} \frac{\delta}{2}
  = {\sigma^2 \over 4\nu}.
\end{equ}
One therefore expects the following result.

\begin{conjecture}\label{C2}
  The solution $u^N$ of the finite difference approximation~\eref{e:approxN} converges, as
  $N\to\infty$, to the solution of
  \begin{equ}
    du = \nu\,\d_x^2 u\,dt - u \,\d_x u\,dt + {\sigma^2 \over 4\nu}\,dt + \sigma\,dw(t).
  \end{equ}
\end{conjecture}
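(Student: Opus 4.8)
The limiting equation differs from \eref{e:Burgers} only by the deterministic constant drift $\sigma^2/(4\nu)$, hence is well-posed on the circle by the argument recalled after \eref{e:Burgers}; call its solution $u$, with some fixed initial condition having the spatial regularity of Brownian motion, and couple the driving noises by $W_j = \la w,\delta^{-\hf}\mathbf{1}_{[j\delta,(j+1)\delta)}\ra$, so that $W^N$ approximates $w$ in the appropriate sense. The plan is to perform a Da Prato--Debussche decomposition uniformly in the mesh. Let $z^N$ be the stationary solution of the linear equation $dz^N=\nu\Delta_N z^N\,dt+\sigma\,dW^N$, let $z$ solve \eref{e:heat}, and write $u^N=z^N+\phi^N$ and $u=z+\phi$. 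Then $\phi^N$ solves the random discrete equation
\begin{equ}
  \d_t\phi^N=\nu\Delta_N\phi^N-z^N D_\delta z^N-\bigl(z^N D_\delta\phi^N+\phi^N D_\delta z^N+\phi^N D_\delta\phi^N\bigr),
\end{equ}
while $\phi$ solves the analogue in which $\Delta_N\mapsto\d_x^2$, $D_\delta\mapsto\d_x$, and the term $-z^N D_\delta z^N$ is replaced by $-z\,\d_x z+\sigma^2/(4\nu)$. Conjecture~\ref{C2} then reduces to three ingredients: (i) $z^N\to z$ in $C([0,T];H^{1-\kappa})$ for small $\kappa>0$; (ii) the renormalised convergence $z^N D_\delta z^N\to z\,\d_x z-\sigma^2/(4\nu)$ in $C([0,T];H^{-\kappa})$; and (iii) a priori and stability estimates for $\phi^N$ in $C([0,T];H^s)$ for some $s>1$, uniform in $N$.

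The core is (ii). Wick-decompose $z^N D_\delta z^N={:}z^N D_\delta z^N{:}+\E\bigl[z^N D_\delta z^N\bigr]$. Since the law of the stationary field $z^N$ is invariant under cyclic shifts of the grid, $\E[z^N D_\delta z^N]$ is \emph{exactly} a constant grid function, and its value is pinned down by the computation leading to \eref{e:corr1/4}: it equals $-\sigma^2/(4\nu)$ for every even $N$. It then remains to show that the second-chaos part ${:}z^N D_\delta z^N{:}$ converges to $z\,\d_x z$, which---being a total derivative with vanishing mean---coincides with its own Wick ordering. As both sides sit in the second homogeneous Wiener chaos, I would establish this by a Gaussian covariance computation: one writes each Fourier mode of the difference via Wick's theorem in terms of the Ornstein--Uhlenbeck processes $\xi_n$ (whose time constants $\nu\eta_n^2$ converge to $\nu n^2$), bounds the second moment of the $m$-th mode uniformly in $N$ by a sequence summable against the weight $(1+m^2)^{-\kappa}$ and vanishing for each fixed $m$, and controls time increments via the same Ornstein--Uhlenbeck structure together with a Kolmogorov argument; dominated convergence then finishes it. This yields $-F_N(z^N)\to-z\,\d_x z+\sigma^2/(4\nu)$, precisely the extra drift claimed.

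Ingredient (i) is routine, again from the explicit Fourier representations together with the convergence of the discrete heat kernel to the continuum one (Kolmogorov in $(t,x)$). For (iii) I would run a Picard iteration for $\phi^N$ in $C([0,T];H^s)$, $s>1$, followed by a Gronwall-type stability estimate: thanks to the decomposition, every remaining nonlinear term either was treated in (ii) or carries the regular factor $\phi^N$, so the only inputs needed are the $N$-independent parabolic smoothing of $\e^{t\nu\Delta_N}$ on frequencies $\lesssim N$ (which suffices, the higher modes of $z^N$ being already under control), discrete product inequalities with $N$-independent constants, and the mapping properties of $D_\delta$. Combined with (i)--(ii) this gives a bound on $\phi^N$ uniform in $N$ up to a (possibly random) time, and then $\phi^N\to\phi$; hence $u^N\to u$ in probability in $C([0,T];H^{-\kappa})$, and uniformly on compact subsets of $[0,2\pi]$ after Sobolev embedding.

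The hard part will be (iii): one has to reprove the deterministic Burgers estimates \emph{at the discrete level with constants independent of $N$}, and the one-sidedness of $D_\delta$ is a genuine nuisance here, as it is not skew-adjoint, so the cancellation $\la u\,\d_x u,u\ra=0$ underlying the usual energy estimate fails for $\la u^N D_\delta u^N,u^N\ra$. The Da Prato--Debussche splitting is precisely what circumvents this---no energy estimate for the full $u^N$ is required---but the discrete paraproduct bounds must still be carried out with care. A secondary difficulty, relevant only if one wants convergence on all of $[0,\infty)$ or at the level of invariant measures rather than on a fixed interval $[0,T]$, is to obtain global-in-time bounds uniform in $N$, which would call for a discrete substitute for the Lyapunov structure of Burgers.
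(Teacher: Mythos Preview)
The paper does \emph{not} prove this statement: it is stated as a conjecture, and the paper's support for it consists of (a) the heuristic computation~\eref{e:corr1/4}, in which the expectation of the zero Fourier mode of $-v\,D_\delta v$ for the \emph{linearised} equation~\eref{e:heat} is evaluated explicitly and shown to equal $\sigma^2/(4\nu)$ for every even $N$, and (b) the numerical experiment summarised in figure~\ref{fig:burgers}. No convergence argument for the nonlinear equation is attempted.

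Your proposal therefore goes well beyond the paper: you outline a genuine proof via a Da~Prato--Debussche splitting $u^N=z^N+\phi^N$, a second-chaos analysis of $z^N D_\delta z^N$, and uniform-in-$N$ estimates on the remainder $\phi^N$. The point of contact with the paper is precisely your ingredient~(ii): your observation that $\E[z^N D_\delta z^N]$ is a constant grid function equal to $-\sigma^2/(4\nu)$ \emph{is} the computation~\eref{e:corr1/4}, now promoted from a heuristic on the zero mode to the deterministic part of a Wick decomposition. What your approach buys, relative to the paper, is an actual pathwise convergence statement rather than a prediction verified numerically; what the paper's heuristic buys is that the constant is pinned down by an elementary two-line calculation, without any of the analytic machinery.

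Two remarks on the details. First, the regularity claimed in~(i) is too optimistic: the stationary solution $z$ of~\eref{e:heat} driven by space-time white noise takes values in $H^{\hf-\kappa}$, not $H^{1-\kappa}$; correspondingly, in~(iii) one should aim for $\phi^N$ in $C([0,T];H^s)$ with $s\in(\hf,\tfrac32)$ rather than $s>1$ as a hard threshold. This is a slip, not a structural gap---the scheme still closes, since $z\,\d_x z=\hf\,\d_x(z^2)\in H^{-\hf-\kappa}$ and parabolic smoothing then places $\phi$ comfortably above $H^{\hf}$. Second, your candid acknowledgement that the discrete product and smoothing estimates in~(iii) must be proved with $N$-independent constants, and that the non-skewness of $D_\delta$ obstructs the naive energy method, is exactly right; this is where the real work lies, and the paper makes no attempt at it.
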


\begin{figure}
  \begin{center}
    \includegraphics{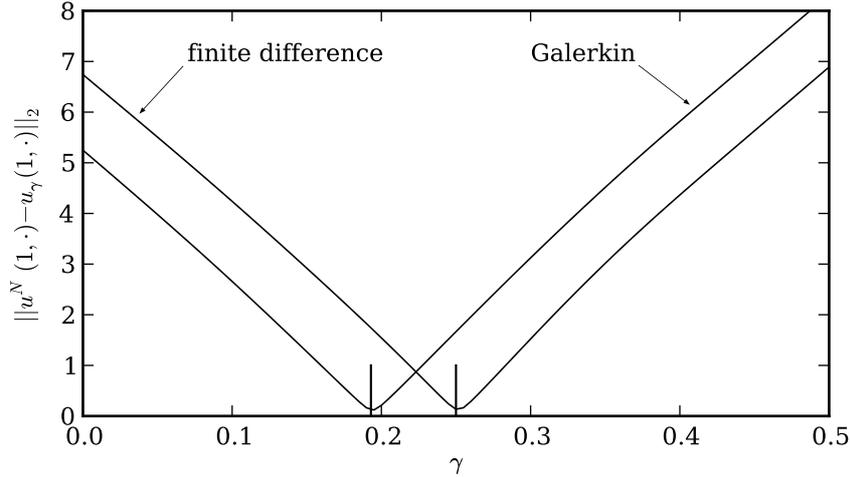}
  \end{center}
  \caption[]{\label{fig:burgers}\it This figure compares the solution
    $u^N$ of the finite difference approximation~\eref{e:approxN}
    to the solution $u_\gamma$ of the ``corrected''
    SPDE~\eref{e:corrected} (which includes an additional drift
    term~$\gamma\sigma^2/\nu$).  The curves show the $L^2$-norm
    difference between the solutions (for the same instance of the
    noise) as a function of $\gamma$, once using finite difference
    discretisation~\eref{e:burgersFD} of the linear part
    (corresponding to the top-most curve in
    figure~\ref{fig:solutions}) and once using the spectral Galerkin
    discretisation.  The two vertical line segments give the predicted
    locations for the minima of the two curves.  It can be seen that
    predictions and simulations are in good agreement.}
\end{figure}

To test this conjecture, we use the following numerical experiment:
We numerically solve both the ``approximating'' equation~\eref{e:approxN}
and the ``corrected'' SPDE
\begin{equ}[e:corrected]
  du_\gamma
  = \nu\,\d_x^2 u_\gamma\,dt - u_\gamma\,\d_x u_\gamma\,dt
    + \gamma\frac{\sigma^2}{\nu}\,dt
    + \sigma\,dw(t),
\end{equ}
until a fixed time~$T$, using the same instance of the noise.  To
discretise the term $u\,\d_x u$ in \eref{e:corrected} we use the
approximation $\bigl(u(x)^2/2-u(x-\delta)^2/2\bigr)/\delta$, which is
known to converge to the exact solution, see section~\ref{sec:regular}
below.  Also, for increased accuracy, we use a finer grid
for~\eref{e:corrected} than we did for~\eref{e:approxN}.  Now we can
compare the two solutions by considering $\|u^N(T,\quark) -
u_\gamma(T,\quark)\|_2$ as a function of~$\gamma$: If
conjecture~\ref{C2} is correct, this function will take its minimum at
$\gamma \approx 1/4$.  The result of such a simulation is given by the
line labelled ``finite difference'' in figure~\ref{fig:burgers}.  The
minimum of the curve is indeed located close to~$\gamma=1/4$, thus
giving support to conjecture~\ref{C2}.

Even though the correction terms in conjectures \ref{C1} and~\ref{C2}
(with $a=1$ and $b=0$) coincide, the constants arise in completely
different ways.  This might lead to the speculation that the value of
this constant is an intrinsic property of the limiting equation,
rather than of the particular way of approximating it.  The following
argument shows that this is not the case.  One can repeat the
calculation leading to conjecture~\ref{C2} with a `spectral Galerkin'
approximation of the linear part of the equation, but retaining a
`finite difference' approximation of the nonlinearity.  In other
words, we consider \eref{e:approxN} as before, but we take for
$\Delta_N$ the self-adjoint matrix with eigenvectors $\e^{inx}$ and
eigenvalues $-n^2$. (This can be achieved by first applying the
discrete Fourier transform, then multiplying the $n$th component by
$-n^2$, and finally applying the inverse Fourier transform.)  In this
case one has $\eta_n = n$ and it transpires that the correction term
is given by
\begin{equ}
  \sum_{n =1}^{N/2} {\sigma^2 \over 2\pi\nu \eta_n^2} {1-\cos n\delta \over \delta}
  \approx {\sigma^2\over 2\pi\nu} \int_0^{\pi} {1-\cos \kappa\over \kappa^2}\,d\kappa
  \approx {0.193\,\sigma^2\over \nu}.
\end{equ}
which is clearly different from \eref{e:corr1/4}.

To verify that the spectral Galerkin discretisation of the linear part
indeed leads to this different correction term, we modify the code which
we used to validate conjecture~\ref{C2} above.  The result of this
simulation is given by the line labelled ``Galerkin'' in
figure~\ref{fig:burgers}.  Again, there is good agreement between our
conjecture and the simulation results.

\subsection{The Case of More Regular Noise}
\label{sec:regular}

To conclude this section, let us argue that the situation considered
in this article is truly a borderline case in terms of regularity and
that if we drive \eref{e:BurgersApprox} by noise that gives rise to
slightly more regular solutions, the solutions of~\eref{e:BurgersApprox}
converge to those of \eref{e:Burgers} without any correction
term. Indeed, consider a general semilinear stochastic PDE driven by
additive noise:
\begin{equ}[e:general]
  du = -Au\,dt + F(u)\,dt + Q\,dw(t),
\end{equ}
where $A$ is a strictly positive-definite selfadjoint operator on some
Hilbert space $\CH$, $F$ is a (possibly unbounded) nonlinear operator
from $\CH$ to $\CH$, $W$ is a standard cylindrical Wiener process on
$\CH$, and $Q\colon \CH \to \CH$ is a bounded operator.

Denote furthermore $\CH^\alpha = \CD(A^\alpha)$ for $\alpha > 0$ and
let $\CH^{-\alpha}$ be the dual space to $\CH^\alpha$ under the dual
pairing given by the Hilbert space structure of $\CH$. (So that
$\CH^{-\alpha}$ is a superspace of $\CH$ for $\alpha > 0$.) We also denote by $\|\cdot\|_\alpha$
the natural norm of $\CH^\alpha$. Finally,
we denote as before by $v$ the solution to the linearised system
\begin{equ}
  dv = -Av\,dt + Q\,dw(t),
\end{equ}
which we assume to be an $\CH$-valued Gaussian process with almost
surely continuous sample paths.  One then has the following
convergence result:

\begin{theorem}\label{theo:approx}
  Assume that there exists $\gamma \ge 0$ and $a \in [0,1)$ such that
  $F \colon \CH^\gamma \to \CH^{\gamma - a}$ is locally Lipschitz
  continuous and such that the process $v$ has continuous sample paths
  with values in $\CH^\gamma$.  Assume furthermore that $F_\eps\colon
  \CH^\gamma \to \CH^{\gamma - a}$ is such that $F_\eps$ is locally
  Lipschitz and such that the convergence $F_\eps(u) \to F(u)$ takes
  place in $\CH^{\gamma - a}$, locally uniformly in $\CH^\gamma$.
  Then, the solutions $u_\eps$ to
  \begin{equ}[e:generaleps]
    du_\eps = -Au_\eps\,dt + F(u_\eps)\,dt + Q\,dw(t),
  \end{equ}
  converge to the solutions to \eref{e:general} as $\eps \to 0$.
\end{theorem}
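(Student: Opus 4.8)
The plan is to remove the noise by a deterministic shift, reducing the claim to a pathwise continuity statement for a semilinear parabolic equation, and then to run the standard contraction-plus-Gronwall argument. Let $v$ denote the stationary solution of $dv = -Av\,dt + Q\,dw(t)$, which by assumption has almost surely continuous sample paths with values in $\CH^\gamma$, and set $\rho = u - v$ and $\rho_\eps = u_\eps - v$, where $u_\eps$ solves \eref{e:generaleps} with the nonlinearity $F_\eps$. Then $\rho$ and $\rho_\eps$ satisfy the \emph{noiseless} random equations $\d_t \rho = -A\rho + F(\rho + v)$ and $\d_t\rho_\eps = -A\rho_\eps + F_\eps(\rho_\eps + v)$, with initial datum $\rho(0) = \rho_\eps(0) = u(0) - v(0) \in \CH^\gamma$. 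From now on I fix a realisation of $v \in C([0,\infty);\CH^\gamma)$ and argue deterministically. Passing to the mild formulation, $\rho(t) = \e^{-tA}\rho(0) + \int_0^t \e^{-(t-s)A} F(\rho(s)+v(s))\,ds$ and similarly for $\rho_\eps$. Since $A$ is positive and self-adjoint, $\e^{-tA}$ is analytic and enjoys the smoothing bound $\|\e^{-tA}\|_{\CH^{\gamma-a}\to\CH^{\gamma}} \le C\,t^{-a}$; because $a<1$ the singularity $t^{-a}$ is integrable, so together with the local Lipschitz property of $F$ and $F_\eps$ from $\CH^\gamma$ to $\CH^{\gamma-a}$ and the continuity of $v$, a Banach fixed point argument yields unique maximal mild solutions $\rho,\rho_\eps \in C([0,T^\ast);\CH^\gamma)$ — this is the standard local theory, cf.~\cite{ZDP,SPDENotes}.

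The core of the argument is an a priori bound on $\rho_\eps$ uniform in small $\eps$ on any time interval strictly shorter than the life span of the limit. Let $T^\ast$ be the maximal existence time of $\rho$, fix $T < T^\ast$, and set $M = \sup_{[0,T]}\|\rho\|_\gamma$ and $V = \sup_{[0,T]}\|v\|_\gamma$, both finite; let $L$ be a Lipschitz constant for $F$ on the ball $B = \{\,\|\cdot\|_\gamma \le M + V + 1\,\}$ of $\CH^\gamma$, and put $\delta_\eps = \sup_{w\in B}\|F_\eps(w) - F(w)\|_{\gamma-a}$, which tends to $0$ by the locally uniform convergence $F_\eps \to F$. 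Define $\tau_\eps = \sup\{\, t \le T : \|\rho_\eps(s)-\rho(s)\|_\gamma \le 1 \text{ for all } s \le t\,\}$, so that on $[0,\tau_\eps]$ both $\rho(s)+v(s)$ and $\rho_\eps(s)+v(s)$ lie in $B$. Subtracting the two mild equations on this interval and splitting $F_\eps(\rho_\eps+v) - F(\rho+v) = (F_\eps-F)(\rho_\eps+v) + \bigl(F(\rho_\eps+v) - F(\rho+v)\bigr)$, the smoothing bound and the Lipschitz estimate give
\[
\|\rho_\eps(t)-\rho(t)\|_\gamma \;\le\; C\int_0^t (t-s)^{-a}\Bigl(\delta_\eps + L\,\|\rho_\eps(s)-\rho(s)\|_\gamma\Bigr)\,ds ,
\]
and the weakly singular Gronwall lemma upgrades this to $\sup_{[0,\tau_\eps]}\|\rho_\eps-\rho\|_\gamma \le C_T\,\delta_\eps$. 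For $\eps$ small enough the right-hand side is $< 1$, so by continuity of $\rho_\eps - \rho$ we necessarily have $\tau_\eps = T$; hence $\sup_{[0,T]}\|u_\eps - u\|_\gamma = \sup_{[0,T]}\|\rho_\eps - \rho\|_\gamma \le C_T\,\delta_\eps \to 0$. Letting $T \uparrow T^\ast$ gives convergence in $C([0,T];\CH^\gamma)$ for every $T$ below the blow-up time of $u$, which is the assertion; since everything was done pathwise, one also gets convergence in probability.

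I expect the only genuinely non-routine step to be closing this self-consistent bound, i.e.\ ruling out that $\rho_\eps$ escapes $B$ before time $T$ uniformly for small $\eps$; everything else is the usual analytic-semigroup and Gronwall toolbox. Two minor points need attention. First, the splitting above uses $F_\eps \to F$ \emph{uniformly} along the orbit $\{\rho_\eps(s)+v(s) : s \le \tau_\eps,\ \eps\ \text{small}\}$; the definition of $\tau_\eps$ keeps this orbit inside the fixed bounded set $B$, which is what "locally uniformly in $\CH^\gamma$" is meant to cover. (If one insists on reading "locally uniformly" as "uniformly on compact sets", one observes in addition that the Duhamel term gains $\beta \in (a,1)$ derivatives, so that via a compact embedding $\CH^{\gamma-a+\beta} \hookrightarrow \CH^\gamma$ the orbit is in fact precompact in $\CH^\gamma$.) Second, if the initial data of \eref{e:general} and \eref{e:generaleps} are only assumed to converge rather than coincide, one carries the extra term $\|\e^{-tA}(\rho_\eps(0)-\rho(0))\|_\gamma \le \|\rho_\eps(0)-\rho(0)\|_\gamma$ through the Gronwall step and absorbs it into the right-hand side after enlarging $\delta_\eps$ accordingly.
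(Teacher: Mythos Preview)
Your proof is correct and follows essentially the same route as the paper: mild formulation, the splitting $F_\eps(u_\eps)-F(u)=(F_\eps-F)(u_\eps)+(F(u_\eps)-F(u))$, the semigroup smoothing bound $\|\e^{-tA}\|_{\CH^{\gamma-a}\to\CH^\gamma}\le Ct^{-a}$, and a Gronwall-type closure on a time interval where both orbits stay in a fixed ball. The only cosmetic differences are that you first subtract the stochastic convolution $v$ to make the pathwise well-posedness in $\CH^\gamma$ explicit (the paper leaves this implicit, since the noise cancels in $u-u_\eps$ anyway) and that you invoke the weakly singular Gronwall lemma directly rather than taking $t$ small enough to absorb $C_R t^{1-a}<1$ and iterating.
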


\begin{proof}
  The proof is straightforward and we only sketch it. We assume
  without loss of generality that the parameter $\eps$ is chosen in
  such a way that for every $R>0$ there exists a constant $C_R$ such
  that
  \begin{equ}[e:approxF]
    \sup_{\|x\|_\gamma \le R} \|F_\eps(u) - F(u)\|_{\gamma-a} \le C_R \eps.
  \end{equ}
  Denote now by $u$ the solution to \eref{e:general} with initial
  condition $u_0$ and by $u_\eps$ the solution to \eref{e:generaleps}
  with initial condition $u_0^\eps$. Let $t>0$ be small enough so that
  $\|u(s)\|_\gamma \le R$ and $\|u(s) - u_\eps(s)\|_\gamma \le R$ for
  $s \le t$.  We then have
  \begin{equs}
    \|u(t) - u_\eps(t)\|_\gamma
      &\le \|u_0 - u_0^\eps\|_\gamma + C\int_0^t (t-s)^{-a} \|F_\eps(u_\eps(s)) - F(u(s))\|_{\gamma - a}\,ds \\
      &\le \|u_0 - u_0^\eps\|_\gamma + C\int_0^t (t-s)^{-a} \|F_\eps(u_\eps(s)) - F(u_\eps(s))\|_{\gamma - a}\,ds \\
        &\qquad + C\int_0^t (t-s)^{-a} \|F(u_\eps(s)) - F(u(s))\|_{\gamma - a}\,ds \\
      &\le  \|u_0 - u_0^\eps\|_\gamma + C_R \eps + C_R t^{1-a} \sup_{s \le t} \|u(s) - u_\eps(s)\|_\gamma.
  \end{equs}
  The claim then follows by taking $t$ sufficiently small and
  performing a simple iteration.
\end{proof}

Despite its simplicity, this criterion is surprisingly sharp. Indeed,
we argue that if we consider \eref{e:Burgers} but with the space-time
white noise $dw$ replaced by $(1-\d_x^2)^{-\delta}\, dw$ for $\delta >
0$, then the assumptions of theorem~\ref{theo:approx} can be satisfied
with some choice of exponent $\gamma$ for the approximation
\begin{equ}
  F_\eps(u)(x) = u(x) {u(x+\eps) - u(x) \over \eps} \eqdef u(x) \bigl(D_\eps u\bigr)(x).
\end{equ}
Obviously, this cannot be the case when $\delta = 0$, since we then
observe the convergence to solutions to \eref{e:BurgersWrong}.

Indeed, we first note that since the linear operator appearing in
\eref{e:Burgers} is of \textit{second} order, we have the
correspondence
\begin{equ}
  \CH^{\gamma} = H^{2\gamma},
\end{equ}
between interpolation spaces and fractional Sobolev spaces. In order
to keep our notation coherent throughout this section, we still denote
by $\|\cdot\|_\gamma$ the norm of $\CH^\gamma$, \textit{i.e.}\
$\|u\|_\gamma = \|(1-\d_x^2)^\gamma u\|_{L^2}$, where we implicitly
endow $\d_x^2$ with periodic boundary conditions.  With this notation,
we get the following result.

\begin{lemma}\label{lem:approxDer}
  For $\gamma \ge 0$ and $a \in [{1\over 2},1]$, we have $\|D_\eps u -
  \d_x u\|_{\gamma-a} \le C \eps^{2a-1} \|u\|_\gamma$.
\end{lemma}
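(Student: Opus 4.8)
The plan is to work on the Fourier side, since both $D_\eps$ and $\d_x$ act diagonally there. Write $u = \sum_{n} \hat u_n \e^{inx}$, so that $\d_x u$ has Fourier coefficients $in\,\hat u_n$, while from the computation leading to \eref{e:approxv} the operator $D_\eps$ has Fourier multiplier
\begin{equ}
  m_\eps(n) = \frac{\e^{in\eps}-1}{\eps}.
\end{equ}
(Here I use the specific $D_\eps$ with $a=1$, $b=0$ appearing in the displayed definition of $F_\eps$ just above the lemma; the general case is identical up to constants.) The norm $\|w\|_{\gamma-a}$ is the weighted $\ell^2$ norm $\bigl(\sum_n (1+n^2)^{\gamma-a}|\hat w_n|^2\bigr)^{1/2}$, so the statement reduces to the pointwise multiplier bound
\begin{equ}[e:multbd]
  (1+n^2)^{-a}\,\bigl| m_\eps(n) - in \bigr|^2 \le C\,\eps^{2(2a-1)}
  \qquad\text{for all } n\in\Z,
\end{equ}
after which one sums against $(1+n^2)^\gamma |\hat u_n|^2$ to get $\|D_\eps u - \d_x u\|_{\gamma-a}^2 \le C\eps^{2(2a-1)}\|u\|_\gamma^2$ and takes square roots.

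So the real content is \eref{e:multbd}, and the main point is to interpolate between two regimes. For small frequencies, $|n\eps|\le 1$ say, a Taylor expansion gives $|m_\eps(n) - in| = \bigl|\tfrac{\e^{in\eps}-1}{\eps} - in\bigr| \le C\eps n^2$; since $\eps n \le 1$ we may write $\eps n^2 = (\eps n)^{2a-1}\cdot n\cdot (\eps n)^{2-2a} \le C\,\eps^{2a-1} n$ using $2-2a\le 1$, hence $\bigl|m_\eps(n)-in\bigr|\le C\eps^{2a-1}(1+n^2)^{1/2} \le C\eps^{2a-1}(1+n^2)^{a}$ because $a\ge 1/2$; squaring yields \eref{e:multbd}. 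For large frequencies, $|n\eps|\ge 1$, one uses the crude bounds $|m_\eps(n)|\le 2/\eps$ and $|in|\le n \le C\eps^{-1}$, so $|m_\eps(n)-in|\le C\eps^{-1} = C\eps^{-1}(\eps n)^{2a-1}(\eps n)^{1-2a}\le C\eps^{2a-2} n^{2a-1}$ since $1-2a\le 0$ and $\eps n\ge 1$; then $\eps^{2a-2}n^{2a-1} = \eps^{2a-1}\cdot \eps^{-1}n^{2a-1} \le C\eps^{2a-1}n^{2a}$, and since $2a\le 2$ we can absorb this into $\eps^{2a-1}(1+n^2)^a$, giving \eref{e:multbd} again.

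The step I expect to require the most care is the large-frequency bookkeeping, because there the number of surviving modes grows like $\eps^{-1}$ and one has to check that the multiplier decays fast enough (it only decays like $n^{-1}$ relative to $\d_x u$, as already noted after \eref{e:approxv}) for the weighted sum to still be controlled by $\eps^{2a-1}\|u\|_\gamma$; this is exactly where the hypothesis $a\ge 1/2$ is used, and it is the place where the constraint $a\le 1$ (so that $2a-1\le 1$ and the exponent is not too demanding) also enters. Everything else — the Plancherel identity for the fractional Sobolev norms, the triangle inequality, and the small-$n$ Taylor estimate — is routine, so I would state \eref{e:multbd} as the key claim, prove it by the two-regime split above, and then conclude in one line.
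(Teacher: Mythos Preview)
Your approach is exactly the paper's: reduce to a pointwise bound on the Fourier multiplier $M_\eps(k) = \eps^{-1}(e^{ik\eps}-1-ik\eps)$. The execution, however, has a few slips.

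A minor one: with the paper's convention $\|u\|_\gamma = \|(1-\d_x^2)^\gamma u\|_{L^2}$ the Fourier weight is $(1+n^2)^{2\gamma}$, not $(1+n^2)^\gamma$, so the needed pointwise inequality is $|M_\eps(n)| \le C\eps^{2a-1}(1+n^2)^a$. Your displayed multiplier bound has the wrong power, though what you go on to establish is in fact this correct version.

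Two genuine arithmetic errors. In the low-frequency step you write $\eps n^2 = (\eps n)^{2a-1}\cdot n\cdot (\eps n)^{2-2a}\le C\eps^{2a-1} n$; but bounding $(\eps n)^{2-2a}\le 1$ leaves $(\eps n)^{2a-1}\, n = \eps^{2a-1} n^{2a}$, not $\eps^{2a-1} n$ (the claimed inequality $\eps n^2\le C\eps^{2a-1} n$ fails, e.g., for $a=3/4$, $\eps=10^{-2}$, $n=50$). Since $\eps^{2a-1}n^{2a}\le \eps^{2a-1}(1+n^2)^a$ directly, the conclusion survives. In the high-frequency regime $|n\eps|\ge 1$ you claim $|in|\le n \le C\eps^{-1}$, but this is backwards: there $n \ge \eps^{-1}$, so the bound $|M_\eps(n)|\le C\eps^{-1}$ is simply false. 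The correct crude estimate is $|M_\eps(n)|\le 2\eps^{-1}+n\le 3n$, after which $n\le (\eps n)^{2a-1}\, n = \eps^{2a-1} n^{2a}$ (using $\eps n\ge 1$ and $2a-1\ge 0$) gives what you need.

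The paper handles both regimes in one stroke: from $|M_\eps(k)|\le k\,(\eps k\wedge 1)$ and the elementary interpolation $\min(x,1)\le x^{2a-1}$ valid for $2a-1\in[0,1]$, one obtains $|M_\eps(k)|\le \eps^{2a-1}k^{2a}$ immediately, without splitting into cases.
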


\begin{proof}
  The operator $D_\eps - \d_x$ is given by the Fourier multiplier
  $M_\eps(k) = \eps^{-1}\bigl(e^{ik\eps} - 1 - ik\eps\bigr)$.  We
  immediately obtain the bound
  \begin{equ}
    |M_\eps(k)| \le k\bigl(\eps k \wedge 1\bigr) \le k (\eps k)^{2a-1},
  \end{equ}
  from which the claim follows at once.
\end{proof}

Since furthermore $\CH^\gamma$ is an algebra for $\gamma > {1\over
  4}$, we conclude that the bound \eref{e:approxF} holds (for some
different power of $\eps$), provided that we choose $\gamma \in
({1\over 4},{1\over 2}]$ and $a = 2\gamma$.  On the other hand, the
solution to the linearised equation
\begin{equ}
  dv = \d_x^2 v\,dt + (1-\d_x^2)^{-\delta}\, dw
\end{equ}
belongs to $\CH^\gamma$ if and only if $\gamma < {1\over 4} + \delta$,
thus supporting our claim that the case $\delta = 0$ is precisely
borderline for the applicability of theorem~\ref{theo:approx}.

If on the other hand we make the more natural choice
\begin{equ}[e:goodF]
  F_\eps(u) = {1\over 2} D_\eps \bigl(u^2\bigr),
\end{equ}
then it turns out that we can apply theorem~\ref{theo:approx} even in
the case $\delta = 0$. Indeed, it follows from standard Sobolev theory
(see for example \cite{SPDENotes}) that if $\gamma \in ({1\over
  8},{1\over 4})$, the map $u \mapsto u^2$ is locally Lipschitz from
$\CH^\gamma$ into $\CH^\beta$ provided that $\beta < 2\gamma - {1\over
  4}$.  As a consequence, it follows from lemma~\ref{lem:approxDer}
that the approximation $F_\eps$ given by \eref{e:goodF} converges to
$u \,\d_x u$ in the sense of \eref{e:approxF}, provided that $\gamma \in
({1\over 8},{1\over 4})$ and $a \in ({3\over 4}-\gamma,1)$.

\begin{remark}
  Some \textit{ad hoc} numerical scheme was shown to converge to the exact
  solution in \cite{AlGy06}. Some other schemes are shown to converge
  in \cite{GuKieNi02}, but only in the case $\delta > 0$ of more
  regular noise.  A particle approximation to a specific modification
  was constructed in \cite{GugDuan}.
\end{remark}

\subsection{Numerical Verification of the Borderline Case}

We have performed numerical simulations that corroborate the argument
presented in the previous section and show that $\delta = 0$ truly is
the borderline case for the limiting equation to be independent on the
type of discretisation performed on the nonlinearity.  In the case
$\delta < 0$ (\textit{i.e.}\ the case where the driving noise is
\textit{rougher} than space-time white noise), our preceding
discussion suggests that the centred discretisation should converge to
the correct solution for $|\delta|$ sufficiently small, but that the
solutions to both the left-sided and the right-sided discretisations
should diverge as $\eps \to 0$.

In order to verify this effect, we numerically solve the SPDE
\begin{equ}[e:roughBurgers]
  du = \nu\,\d_x^2 u\,dt - u\,\d_x u\,dt + (1-\d_x^2)^{-\delta} \,dw,
\end{equ}
using different discretisation schemes and different values for
$\delta$.  The results are shown in figure~\ref{fig:rough}.  The
simulations are in good agreement with the argument outlined above.

\begin{figure}
  \begin{center}
    \includegraphics{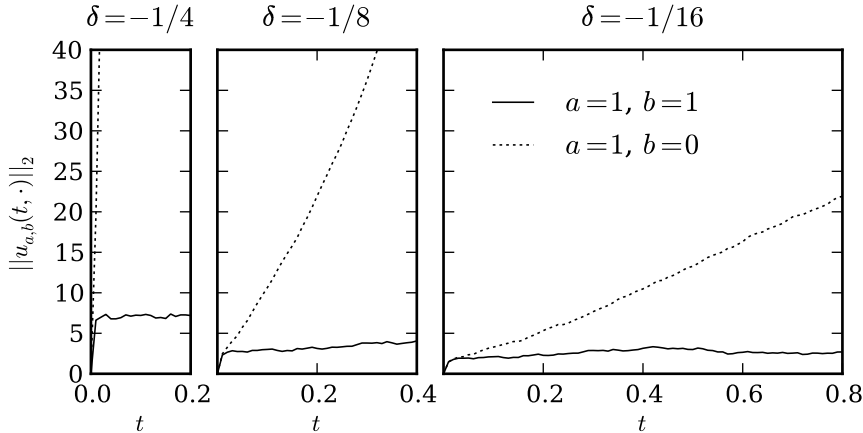}
  \end{center}
  \caption{\label{fig:rough}\it Illustration of the divergence of the
    right-sided discretisation for noise rougher than space-time white
    noise.  The three panels show, for different values of $\delta$,
    the $L^2$-norm of numerical solutions to
    equation~\eref{e:roughBurgers}.  The full lines correspond to a
    centred discretisation ($a=1$, $b=1$) whereas the dotted lines
    correspond to the right-sided discretisation ($a=1$, $b=0$).  The
    figures show that only the centred discretisation seems to be
    stable for $\delta<0$.}
\end{figure}


\section{Possible Generalisations of the Argument}
\label{S:general}

In this section, we discuss a number of possible extensions of these
results to more general Burgers-type equations.  We restrict ourselves
in this discussion to the case $a=1$, $b=0$, \textit{i.e.}\ to
right-sided discretisations. This is purely for notational
convenience, and one would expect similar correction terms to appear
for arbitrary values of $a$ and $b$, just as before.

\subsection{More General Nonlinearities}

Consider the equation
\begin{equ}[e:BurgersGen]
  du_i = \nu\,\d_x^2 u_i\,dt + \sum_{j=1}^d\d_j h_i(u) \d_x u_j\,dt + \sigma\,dw_i,
\end{equ}
for an $\R^d$-valued process $u$ and a smooth function $h \colon \R^d
\to \R^d$ with bounded second and third derivatives.  Rewriting the
nonlinearity as $\d_x \bigl(h_i(u)\bigr)$, we see that this equation
is globally well-posed.  As before, we consider the approximating
equation
\begin{equ}[e:approxGen]
  du_i^\eps(x,t)
  = \nu\,\d_x^2 u_i^\eps(x,t)\,dt +\sum_{j=1}^d \d_j h_i\bigl(u^\eps(x,t)\bigr) D_\eps u_j^\eps(x,t)\,dt + \sigma\,dw_i(t).
\end{equ}
The idea now is to introduce a cut-off frequency $N$ and to write
$u^\eps = \bar u^\eps + \tilde u^\eps$, where $\bar u^\eps$ is the
projection of $u^\eps$ onto Fourier modes with $|k| \le N$. Since the
linear part of the equation dominates the nonlinearity at high
frequencies, one expects $\tilde u^\eps$ to be well approximated by
$\tilde v$, the projection of $v$ onto the high frequencies.  This on
the other hand is small in the $L^\infty$ norm (it decreases like
$N^{-s}$ for every $s < {1\over 2}$), so that
\begin{equ}
  \d_j h_i (u^\eps)
  \approx \d_j h_i (\bar u^\eps) + \sum_{k=1}^d \d^2_{jk} h_i(\bar u^\eps) \tilde v_k.
\end{equ}
It now follows from the same argument as before that the term
$\d^2_{jk} h_i(\bar u^\eps) \tilde v_k D_\eps v_j$ is expected to
yield a non-vanishing contribution for $k = j$ in the limit $\eps \to
0$ and $N\to\infty$.  Provided that we keep $N \ll {1\over \eps}$,
this contribution will again be described by \eref{e:contrib}, so that
we expect the following behaviour:

\begin{conjecture}\label{C3}
  The solution of \eref{e:approxGen} converges, as $\eps \to 0$, to
  the solution of
  \begin{equ}[e:general,corrected]
    du_i = \nu\,\d_x^2 u_i\,dt
             +  \sum_j\Bigl( \d_j h_i(u) \d_x u_j - {\sigma^2\over 4\nu}\d^2_{jj} h_i(u)\Bigr)\,dt
             + \sigma\,dw_i.
  \end{equ}
\end{conjecture}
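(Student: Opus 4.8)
We do not prove Conjecture~\ref{C3}; let us instead outline the natural strategy. One would realise the heuristic just given as a perturbation of the fixed-point argument behind Theorem~\ref{theo:approx}: instead of vanishing, the error made by the finite-difference nonlinearity should converge to the deterministic drift in~\eref{e:general,corrected}. Work in the stationary regime, so that $v$ (the solution of $dv=\nu\,\d_x^2 v\,dt+\sigma\,dw$) is the Gaussian process analysed above, fix a cut-off $N=N(\eps)$ with $N\to\infty$ but $N\eps\to0$, and write $P_N$, $Q_N$ for the projections onto the Fourier modes $|k|\le N$, $|k|>N$, with $\bar u^\eps\eqdef P_N u^\eps$, $\tilde u^\eps\eqdef Q_N u^\eps$ and $\tilde v\eqdef Q_N v$. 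The first step is a bootstrap giving $\eps$-uniform bounds on $u^\eps$ in $C([0,T],\CH^\gamma)$ for some $\gamma$ just below $1/4$, using that $\d_j h_i$ has bounded derivatives, that $\CH^\gamma$ is an algebra, and the bound $\|D_\eps u-\d_x u\|_{\gamma-a}\le C\eps^{2a-1}\|u\|_\gamma$ of Lemma~\ref{lem:approxDer}. The second step is to show that the high-frequency remainder $\tilde u^\eps-\tilde v$ is negligible: it solves a noise-free linear equation forced by $Q_N$ applied to the nonlinearity, and using the rapid decay of the heat semigroup on the high modes one obtains $\sup_{t\le T}\|\tilde u^\eps-\tilde v\|_\gamma\to0$, which legitimises replacing $\tilde u^\eps$ by $\tilde v$ wherever it is multiplied by a bounded quantity.

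Next one decomposes the nonlinearity as $\sum_j\d_j h_i(u^\eps)(D_\eps u^\eps)_j=\mathrm{(I)}+\mathrm{(II)}+\mathrm{(III)}$. Here $\mathrm{(I)}=\sum_j\d_j h_i(u^\eps)(D_\eps\bar u^\eps)_j$; on the modes $|k|\le N\ll\eps^{-1}$ the operator $D_\eps$ agrees with $\d_x$ up to an error of size $\CO(N\eps)$ (Lemma~\ref{lem:approxDer}), so $\mathrm{(I)}\to\sum_j\d_j h_i(u)\,\d_x u_j$ by the argument of Theorem~\ref{theo:approx}, with no correction. By the chain rule and $\tilde u^\eps\approx\tilde v$, the high-frequency content of $\d_j h_i(u^\eps)$ is close to $\sum_k\d^2_{jk}h_i(\bar u^\eps)\,\tilde v_k$, so the term producing the correction is $\mathrm{(II)}=\sum_{j,k}\d^2_{jk}h_i(\bar u^\eps)\,\tilde v_k\,(D_\eps\tilde v)_j$, while $\mathrm{(III)}$ collects the genuinely lower-order pieces (the Taylor remainder, quadratic in $\tilde v$ and hence $o(1)$ in $L^\infty$; the error in $\tilde u^\eps\approx\tilde v$; the off-resonant part of $\mathrm{(II)}$).

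The central step is the stochastic averaging of $\mathrm{(II)}$. Split $\tilde v_k(D_\eps\tilde v)_j=\E\bigl[\tilde v_k(D_\eps\tilde v)_j\bigr]+\bigl(\tilde v_k(D_\eps\tilde v)_j-\E[\cdot]\bigr)$. By independence of the noise components the expectation is $\delta_{jk}$ times a spatial constant; because $N\ll\eps^{-1}$ and the mass in the computation~\eref{e:contrib} sits at wavenumbers $|n|\sim\eps^{-1}$, the truncation at $N$ removes only $\int_0^{N\eps}\frac{1-\cos\kappa}{\kappa^2}\,d\kappa=\CO(N\eps)$, so this constant equals $-\sigma^2/(4\nu)+o(1)$; multiplying by $\d^2_{jj}h_i(\bar u^\eps)\to\d^2_{jj}h_i(u)$ reproduces exactly the correction term of Conjecture~\ref{C3}, and the vector structure explains why only the diagonal $\d^2_{jj}$ contributes. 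The centred part lies in the second Wiener chaos, so its Sobolev norms are controlled by its variance; its coefficient at Fourier mode $\ell$ is $\sum_{n+m=\ell}\hat v_k(n)\,\widehat{D_\eps v_j}(m)$ minus its mean, with variance $\lesssim\sum_{|n|>N}n^{-4}\to0$, whence this fluctuation tends to $0$ in $C([0,T],\CH^{\gamma-1/2})$ in probability, a property preserved under multiplication by the bounded factor $\d^2_{jk}h_i(\bar u^\eps)$. Collecting $\mathrm{(I)}+\mathrm{(II)}+\mathrm{(III)}$, the nonlinearity of~\eref{e:approxGen} converges, as a forcing, to $\sum_j\d_j h_i(u)\,\d_x u_j-\frac{\sigma^2}{4\nu}\sum_j\d^2_{jj}h_i(u)$, and a Gronwall/iteration argument structurally identical to the proof of Theorem~\ref{theo:approx} closes the estimate and identifies the limit with the solution of~\eref{e:general,corrected}.

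The main obstacle — and the reason this stays a conjecture — is that for space-time white noise the exponents are exactly critical: $v$ lies in $\CH^\gamma$ only for $\gamma<1/4$, while the Sobolev multiplication needed to make sense of $\mathrm{(I)}$ with $u^\eps$ merely in $\CH^\gamma$, and of the products in $\mathrm{(II)}$ and $\mathrm{(III)}$, wants $\gamma>1/4$ — the very borderline feature identified in Section~\ref{sec:regular}. Thus the $\eps$-uniform a priori bounds of the first step and the control of the resonant term do not close by soft energy arguments alone, and one is really asking for a pathwise, renormalised solution theory (of paracontrolled-distribution or regularity-structures flavour) in which the single divergent object $\tilde v_k D_\eps\tilde v_j$ is subtracted together with its limiting constant $\sigma^2/(4\nu)$, the solution is expanded around $v$ with a strictly subcritical remainder, and one verifies that the interaction of that remainder with $v$ generates no further divergent sub-products. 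Setting up such a framework, and with it the uniqueness of the limit, is exactly the step that the numerical evidence reported in this article is meant to circumvent.
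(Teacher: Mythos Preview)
The paper does not prove Conjecture~\ref{C3}; it is presented as a conjecture, preceded by a short heuristic and followed by numerical validation. Your proposal is therefore not to be compared against a proof but against that heuristic, and in this respect the two are in close agreement: the paper introduces the same frequency cut-off $N\ll\eps^{-1}$, the same splitting $u^\eps=\bar u^\eps+\tilde u^\eps$ with $\tilde u^\eps$ replaced by the high-frequency part $\tilde v$ of the linear solution, and the same Taylor expansion $\d_j h_i(u^\eps)\approx\d_j h_i(\bar u^\eps)+\sum_k\d^2_{jk}h_i(\bar u^\eps)\tilde v_k$, from which the diagonal contribution $\d^2_{jj}h_i(\bar u^\eps)\,\tilde v_j\,D_\eps\tilde v_j$ is identified with the correction via the computation~\eref{e:contrib}.

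What your outline adds beyond the paper is the structure one would need to make this rigorous: the bootstrap for $\eps$-uniform bounds, the explicit control of $\tilde u^\eps-\tilde v$, the mean/fluctuation splitting of the resonant product with a second Wiener chaos variance estimate for the centred part, and the closing Gronwall step modelled on Theorem~\ref{theo:approx}. You also correctly isolate the obstruction --- the borderline $\gamma=1/4$ already flagged in Section~\ref{sec:regular} --- and name the type of renormalised solution theory that would be required. This goes well beyond what the paper attempts, but it is an elaboration of the same strategy rather than a different one.
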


In the one-dimensional case we can recover conjecture~\ref{C1} (for
$a=1, b=0$) from conjecture~\ref{C3} by choosing~$h(u) = -u^2/2$
in~\eref{e:general,corrected}.

We perform the following numerical experiment to validate the
functional form of the correction term given in conjecture~\ref{C3}:
We numerically solve both the ``approximating''
equation~\eref{e:approxGen} and, for $p\colon \R\to \R$, the
``corrected'' SPDE
\begin{equ}[e:correctedGen]
  du_p = \nu\,\d_x^2 u_p\,dt
           + h'(u_p) \d_x u_p \,dt
                  - {\sigma^2\over 4\nu}p(u_p) \,dt
           + \sigma\,dw
\end{equ}
until a fixed time~$T$, using the same instance of the noise.
As for the discretisation of~\eref{e:corrected} above, we use the
approximation $\bigl(h(x)-h(x-\delta)\bigr)/\delta$ for the term
$h'(u) \,\d_x u$ in the proposed limit~\eref{e:correctedGen} and we
also solve~\eref{e:correctedGen} on a finer grid than we use
for~\eref{e:approxGen}.  Finally, we numerically optimise the
correction term~$p$ (using some parametric form) in order to minimise
the distance $\|u^N(T,\quark) - u_p(T,\quark)\|_2$.  If the conjecture
is correct, we expect the minimum to be attained for a function $p$
which is close to the predicted correction term $h''$.  The result of
a simulation is shown in figure~\ref{fig:general}.  The figure shows
that there is indeed a good fit between the conjectured and
numerically determined correction terms.

\begin{figure}
  \begin{center}
    \includegraphics{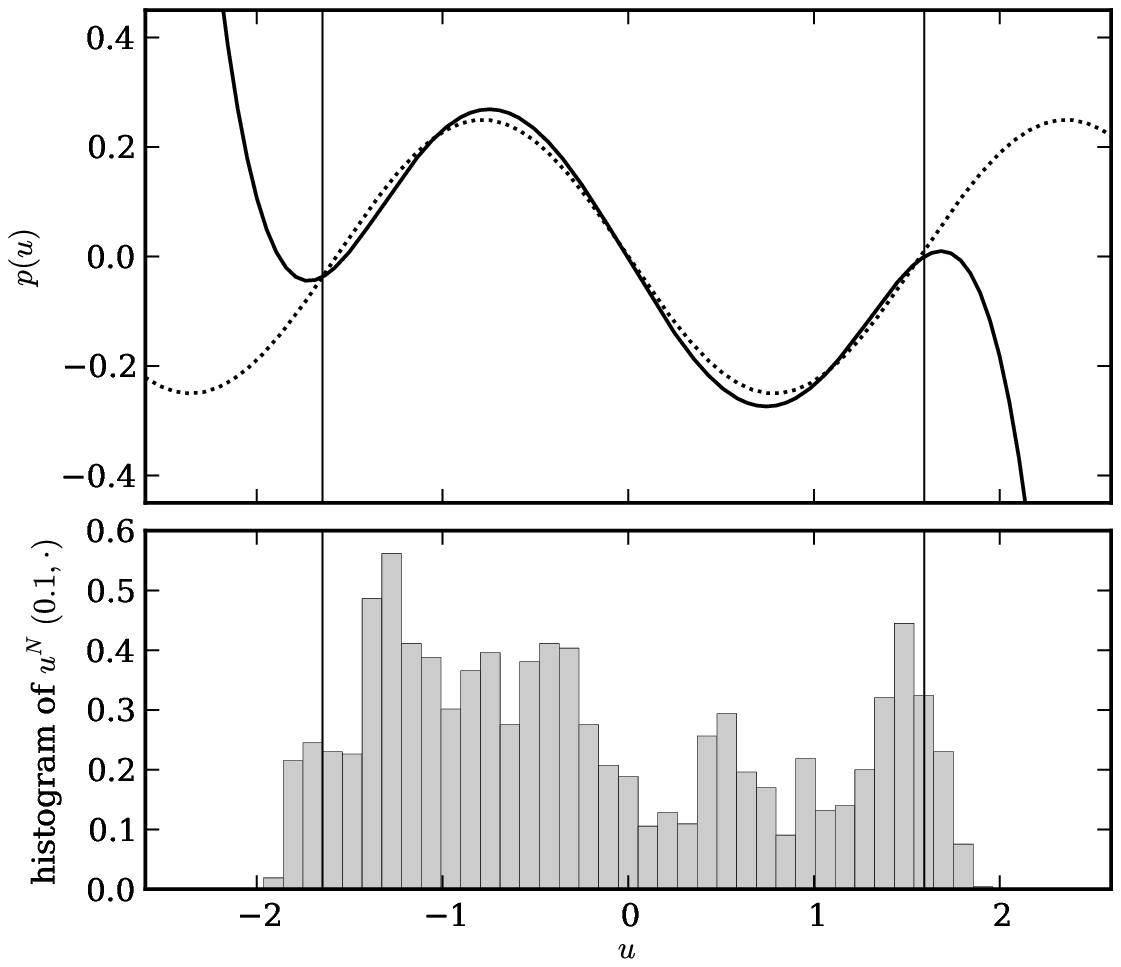}
  \end{center}
  \caption[discretisation error for more general
  equations]{\label{fig:general}\it Illustration of the convergence of
    \eref{e:approxGen} to~\eref{e:general,corrected} for the
    one-dimensional example $h'(x) = \sin(x)^2$.  For the figure we
    numerically compute the finite difference solution $u^N$ to
    \eref{e:approxGen}.  We then compute solutions $u_p$
    to~\eref{e:correctedGen}, using a fifth-order polynomial for the
    correction term~$p$.  This polynomial is then numerically fitted
    to minimise $\|u^N(1,\quark) - u_p(1,\quark)\|_2$.  The top panel
    shows the resulting fitted correction term $-\sigma^2 p/4\nu$
    (full line) together with the correction term $-\sigma^2
    h''(u)/4\nu$ predicted in conjecture~\ref{C3} (dotted line).  To
    give an idea which range of the correction term is actually used
    in the computation, the lower panel shows the histogram of the
    values of~$u^N$ (the vertical bars indicate the 5\% and 95\%
    quantiles).  Between these quantiles, the graph shows a good fit
    between the numerically determined and conjectured correction
    terms.}
\end{figure}

\subsection{Classically Ill-Posed Equations}

Pushing further the class of equations considered in the previous
subsection, one may want to consider equations of the form
\begin{equ}[e:nongrad]
  du_i
  = \nu\,\d_x^2 u_i\,dt
    + \sum_{j=1}^d G_{ij}(u) \d_x u_j\,dt
    + f(u)\,dt
    + \sigma\,dw_i
\end{equ}
for some functions $G\colon \R^d \to \R^{d\times d}$ and
$f\colon \R^d\to \R^d$.  If we do
\textit{not} assume that $G$ has an antiderivative and since solutions
are only expected to be $\alpha$-H\"older continuous in space for
$\alpha < {1\over 2}$, it is no longer even clear what it means to be
a solution to this equation. So, at least classically,
\eref{e:nongrad} is ill-posed and the mere \textit{concept} of
solutions to such an evolution equation is difficult to establish.

However, the discretised equation does of course still make sense for
any fixed value of $\eps$. Furthermore, we observed numerically that
there seems to be no instability as $\eps \to 0$; indeed one observes
pathwise convergence to a limiting process. By analogy with the
behaviour observed for the situations where \eref{e:nongrad} is
classically well-posed (\textit{i.e.}\ when $G$ has an
antiderivative), one would then be tempted to define solutions to
\eref{e:nongrad} to be those processes that can be obtained as limits
as $\eps \to 0$ of the solutions to the equation where $\d_x u_j$ is
replaced by its symmetric discretisation.  In analogy with
conjecture~\ref{C3}, we would then expect solutions of the right-sided
discretisation to converge, as $\eps \to 0$, to solutions of the
corrected equation
\begin{equ}
  du_i
  = \nu\,\d_x^2 u_i\,dt
    + \sum_j\bigl( G_{ij}(u) \d_x u_j - {\sigma^2\over 4\nu}\d_j G_{ij}(u) \bigr) \,dt
    + f(u)\,dt
    + \sigma\,dw_i.
\end{equ}
This reasoning leads to:

\begin{conjecture}\label{C4}
  As $\eps\to 0$, the equations
  \begin{equ}[e:C4.1]
    du^\eps_i
    = \nu\,\d_x^2 u^\eps_i\,dt
      + \sum_{j=1}^d G_{ij}(u^\eps) D_\eps^{1,0} u^\eps_j\,dt
      + f(u^\eps)\,dt
      + \sigma\,dw_i,
  \end{equ}
  where $D_\eps^{1,0}$ denotes the right-sided discretisation, and
  \begin{equ}[e:C4.2]
    d\tilde u^\eps_i
    = \nu\,\d_x^2 \tilde u^\eps_i\,dt
      + \sum_{j=1}^d \bigl( G_{ij}(\tilde u^\eps) D_\eps^{1,1} \tilde u^\eps_j - {\sigma^2\over 4\nu}\d_j G_{ij}(\tilde u^\eps) \bigr) \,dt
      + f(\tilde u^\eps)\,dt
      + \sigma\,dw_i,
  \end{equ}
  where $D_\eps^{1,1}$ denotes centred discretisation, converge to the
  same limit.
\end{conjecture}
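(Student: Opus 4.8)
We propose a direct coupling of the two equations rather than a passage through a common limit (which is not known to exist in the first place). Fix a realisation of the noise $w$, set $w^\eps = u^\eps - \tilde u^\eps$, and aim to show that $w^\eps \to 0$ in $C\bigl([0,T],\CH^{\gamma-a}\bigr)$, locally uniformly in time and in probability, for suitable $\gamma \in ({1\over 8},{1\over 4})$ and $a \in ({3\over 4}-\gamma,1)$ (the same range as in section~\ref{sec:regular}). Since the linear parts, the driving noise and the terms $f(\quark)$ are identical in \eref{e:C4.1} and \eref{e:C4.2}, subtracting the two equations produces a mild (Duhamel) equation for $w^\eps$ whose forcing is the difference of the two nonlinearities. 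The starting point is the elementary identity
\begin{equ}
  D_\eps^{1,0} - D_\eps^{1,1} = \frac{\eps}{2}\,\Delta_\eps,\qquad \bigl(\Delta_\eps u\bigr)(x) = \frac{u(x+\eps) - 2u(x) + u(x-\eps)}{\eps^2},
\end{equ}
which lets us write that difference as $L_i^\eps + R_i^\eps$, where
\begin{equ}
  R_i^\eps = \frac{\eps}{2}\sum_j G_{ij}(u^\eps)\,\Delta_\eps u_j^\eps + \frac{\sigma^2}{4\nu}\sum_j \d_j G_{ij}(u^\eps)
\end{equ}
is a \emph{renormalisation residue} that does not involve $w^\eps$, while $L_i^\eps$ collects the remaining terms, each of which is a product of a locally bounded coefficient with $D_\eps^{1,1}w_j^\eps$ or with $\d_j G_{ij}(u^\eps)-\d_j G_{ij}(\tilde u^\eps)$.

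The heart of the matter is the statement that $\sup_{t\le T}\|R^\eps(t)\|_{\gamma-a}\to 0$ in probability: this is precisely the heuristic of section~\ref{S:general} --- that the right-sided scheme differs from the centred one only by the correction term already inserted into \eref{e:C4.2} --- made rigorous. The plan is to decompose $u^\eps = v + \psi^\eps$, with $v$ the stationary solution of $dv = \nu\,\d_x^2 v\,dt + \sigma\,dw$ and $\psi^\eps$ the (noise-free) remainder, so that $\Delta_\eps u_j^\eps = \Delta_\eps v_j + \Delta_\eps\psi^\eps_j$. For the $v$-contribution $\frac{\eps}{2}G_{ij}(u^\eps)\Delta_\eps v_j$ one performs a Wick-type decomposition relative to the Gaussian field $v$: the resonant expectation is evaluated by exactly the Fourier computation behind \eref{e:contrib} (with an auxiliary cut-off $N\ll\eps^{-1}$, as in the heuristic) and equals $-\frac{\sigma^2}{4\nu}\sum_j\d_j G_{ij}(u^\eps) + o(1)$, thereby cancelling the explicit term in $R^\eps$; the remaining ``Wick-ordered'' product tends to $0$ in $\CH^{\gamma-a}$ by a second-moment bound in the relevant Wiener chaos together with a Borel--Cantelli argument, using that the rescaled second difference $\eps\Delta_\eps v$ has spectrum of order one only for $|k|\sim\eps^{-1}$ and that $G_{ij}(u^\eps)$ decorrelates from those high frequencies. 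For the $\psi^\eps$-contribution one uses that $\eps\Delta_\eps$ is small as an operator when applied to functions more regular than $v$, so that this term vanishes provided one has a uniform-in-$\eps$ gain of spatial regularity for the remainders $\psi^\eps$.

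Granting the residue convergence together with uniform a priori bounds on $\|u^\eps\|_\gamma$ and $\|\tilde u^\eps\|_\gamma$ (expected from the pathwise fixed-point construction of the discretised solutions and moment bounds on $v$), one would like to close the argument as in the proof of theorem~\ref{theo:approx}: Duhamel and the smoothing of the heat semigroup give $\|w^\eps(t)\|_{\gamma-a} \le \|w^\eps_0\|_{\gamma-a} + C\,t^{1-a}\sup_{s\le t}\|R^\eps(s)\|_{\gamma-a} + (\text{a contribution of }L^\eps)$, and \emph{if} the $L^\eps$-contribution could be bounded by $C_R\,t^{1-a}\sup_{s\le t}\|w^\eps(s)\|_{\gamma-a}$, a Gronwall argument on a short interval, iterated up to $T$, would yield $w^\eps\to 0$.

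The main obstacle is concealed precisely in that last bound on $L^\eps$, and, more fundamentally, in the control of the remainders $\psi^\eps$. The contribution $\bigl(G_{ij}(u^\eps)-G_{ij}(\tilde u^\eps)\bigr)D_\eps^{1,1}u^\eps_j$ to $L^\eps$ is \emph{not} controlled by any fixed norm of $w^\eps$ alone: $D_\eps^{1,1}u^\eps$ is only of size $\eps^{-1/2}$, and the product of a H\"older coefficient with a distribution of negative regularity is not defined by continuity. This is exactly the feature that makes \eref{e:nongrad} classically ill-posed, and closing the loop therefore requires tracking a finer structure of $u^\eps$, $\tilde u^\eps$ and $w^\eps$ --- a second-order expansion around the common Gaussian driver $v$ and its relevant quadratic functionals --- in terms of which the apparently divergent products become genuine, uniformly bounded resonant objects; this is also what underlies the uniform regularity gain for $\psi^\eps$ used above. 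In the gradient case $G=\d h$ this gain is immediate, since $\sum_j G_{ij}(u)\,\d_x u_j = \d_x\bigl(h_i(u)\bigr)$ lets the Duhamel integral for $\psi^\eps$ absorb a derivative --- this is the very mechanism behind theorem~\ref{theo:approx} --- but for a general, non-gradient $G$ no such integration by parts is available, and establishing this uniform regularity (equivalently, a robust calculus for the resonant products) lies beyond the soft arguments used above. This is why the statement is formulated only as a conjecture.
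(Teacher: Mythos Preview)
The paper does not contain a proof of this statement: it is explicitly formulated as a conjecture, and the only supporting evidence given in the paper is the heuristic frequency-splitting argument leading up to conjecture~\ref{C3} together with the numerical experiment on the SPDE~\eref{e:strangeSPDE} displayed in figure~\ref{fig:strange}. There is therefore nothing to compare your argument against in the usual sense.

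What you have written is not a proof either, and to your credit you say so yourself in the final paragraph. It is an honest and rather sophisticated outline of a proof \emph{strategy}: couple the two discretised equations, isolate the renormalisation residue $R^\eps$ via the identity $D_\eps^{1,0}-D_\eps^{1,1}=\tfrac{\eps}{2}\Delta_\eps$, cancel the expectation of the resonant part against the inserted correction term by the Fourier computation~\eref{e:contrib}, and then try to close a Gronwall loop on the difference $w^\eps$. You correctly locate the genuine obstruction: the ``Lipschitz'' contribution $L^\eps$ contains the product $\bigl(G_{ij}(u^\eps)-G_{ij}(\tilde u^\eps)\bigr)\,D_\eps^{1,1}u^\eps_j$, in which the second factor diverges like $\eps^{-1/2}$ while the first is only controlled in a H\"older norm of exponent below~${1\over2}$, so that the product is not defined by continuity. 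This is exactly the ill-posedness of~\eref{e:nongrad} reappearing inside the stability estimate, and it cannot be circumvented by the soft tools of section~\ref{sec:regular}. Your remark that closing the argument would require ``a second-order expansion around the common Gaussian driver $v$ and its relevant quadratic functionals'' is prescient --- this is precisely the programme carried out later via regularity structures and paracontrolled distributions --- but none of that machinery is available within the scope of the present paper, which is why the authors stop at numerical verification.

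In short: there is no gap to point to relative to the paper, because the paper makes no claim to a proof. Your write-up is a coherent heuristic that goes somewhat beyond the paper's own reasoning and accurately diagnoses why a rigorous argument is out of reach here.
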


To test conjecture~\ref{C4}, we use the following numerical
experiment: as an example we consider the SPDE
\begin{equation}\label{e:strangeSPDE}
\begin{split}
  \d_t u &= \frac{1}{\sigma^2} \,\d_x^2 u
             + \frac{2}{\sigma^2} \begin{pmatrix}
               0 & \cos(u_2)-\sin(u_1) \\
               \sin(u_1)-\cos(u_2) & 0
             \end{pmatrix} \d_x u \\
           & \hskip5cm
             + \frac{4}{\sigma^2} \begin{pmatrix}
               \phantom{+} \sin(u_1)\cos(u_1) \\
               - \cos(u_2)\sin(u_2)
             \end{pmatrix}
             + \sqrt{2}\, \partial_t w.
\end{split}
\end{equation}
This SPDE is of the form \eqref{e:nongrad} where $G$ has no
antiderivative.  SPDEs like \eref{e:strangeSPDE} occur in the problem
described in \cite[section 9]{HaiStuaVo07} where we argue that the
stationary distribution of this SPDE on $L^2\bigl([0,2\pi],
\R^2\bigr)$ (when equipped with appropriate boundary conditions)
coincides with the distribution of the stochastic differential
equation
\begin{equation*}
  dU(t) = 2 \begin{pmatrix} -\sin(U_2(t)) \\ \phantom{+}\cos(U_1(t)) \end{pmatrix} dt
       + \sigma \,dB(t).
\end{equation*}
For our experiment we numerically solve the SPDEs \eref{e:C4.1}
and~\eref{e:C4.2} and compare the solutions.  The result is displayed
in figure~\ref{fig:strange}.  As can be seen from the figure, the
simulation results are in good agreement with conjecture~\ref{C4}.

\begin{figure}
  \begin{center}
    \includegraphics{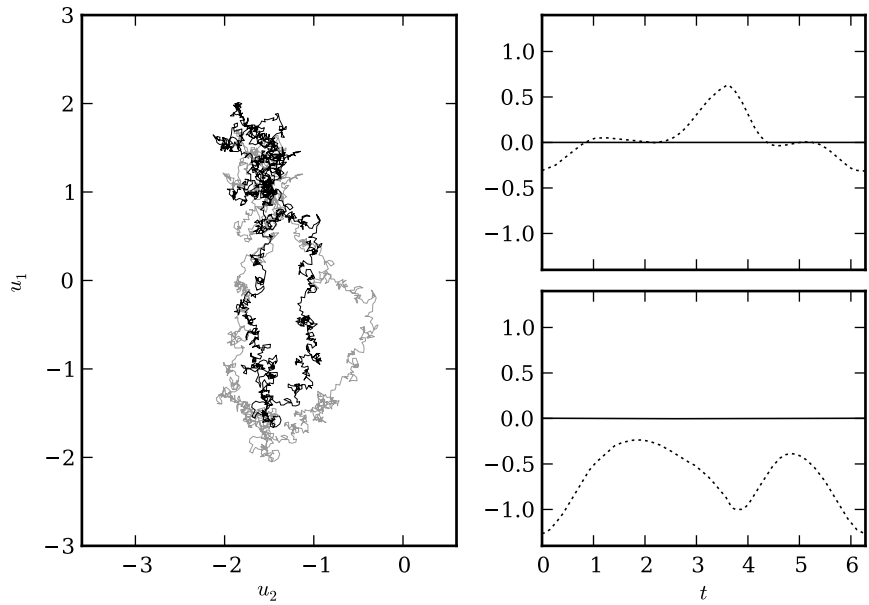}
  \end{center}
  \caption{\label{fig:strange}\it Illustration of the convergence in
    conjecture~\ref{C4}.  The left-hand panel shows a numerical
    solution of the $\R^2$-valued SPDE~\eref{e:strangeSPDE} at a fixed
    time~$t$.  Since we use periodic boundary conditions, the plotted
    graph of $u(t, \quark)$ forms a loop in~$\R^2$.  The black line in
    this plot was obtained using a centred discretisation, whereas the
    grey line in the background was obtained using a right-sided
    discretisation.  As in figure~\ref{fig:solutions}, there is an
    $O(1)$ difference between the two discretisation schemes.  The two
    plots on the right show the differences $u^\eps_1 - \tilde
    u^\eps_1$ (upper panel) and $u^\eps_2 - \tilde u^\eps_2$ (lower
    panel) between the discretisation schemes \eref{e:C4.1}
    and~\eref{e:C4.2} from conjecture~\ref{C4}.  (full lines).  For
    comparison, the plots also show the differences $u^\eps_1 - \bar
    u^\eps_1$ and $u^\eps_2 - \bar u^\eps_2$ where $\bar u^\eps$ is
    the solution of the uncorrected SPDE~\eref{e:strangeSPDE} using a
    centred discretisation (dotted lines).  The graphs support
    conjecture~\ref{C4} by showing good agreement between the
    solutions of \eref{e:C4.1} and~\eref{e:C4.2}.}
\end{figure}

\subsection{Multiplicative Noise}

We conclude this section by considering the equation
\begin{equ}[e:multBurgers]
  du = \nu\,\d_x^2 u\,dt + g(u) \,\d_x u\,dt + f(u)\,dw,
\end{equ}
where $g$ is as before and $f$ is a smooth bounded function with
bounded derivatives of all orders. Such an equation is well-posed if
the stochastic integral is interpreted in the It\^o sense
\cite{GyoNu}. (Note that it is \textit{not} well-posed if the
stochastic integral is interpreted in the Stratonovich sense.  This
follows from the fact that, at least formally, the It\^o correction
term is infinite when $f$ is not constant.)

In such a case, the local quadratic variation of the solution is
expected to be proportional to $f^2(u)$, so that one expects the
right-sided discretisation to exhibit a correction term proportional
to $g'(u) f^2(u)$. More precisely, in analogy to conjecture~\ref{C3},
one would expect the following statement to hold.

\begin{conjecture}\label{C5}
  The solution of
  \begin{equ}[e:C5.1]
    du = \nu\,\d_x^2 u\,dt + g(u) D_\eps u\,dt + f(u)\,dw
  \end{equ}
  converges, as $\eps \to 0$, to the solution of
  \begin{equ}[e:C5.2]
    du = \nu\,\d_x^2 u\,dt + g(u)\,\d_x u\,dt
           - \frac{1}{4\nu}g'(u)f^2(u)\,dt + f(u)\, dw.
  \end{equ}
\end{conjecture}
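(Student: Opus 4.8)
\emph{Proof strategy.} The plan is to reduce Conjecture~\ref{C5} to the additive-noise situation of Conjectures~\ref{C1}--\ref{C3} by a separation-of-scales argument. Fix a cut-off frequency $N$ with $1 \ll N \ll \eps^{-1}$ and split the solution of \eref{e:C5.1} as $u^\eps = \bar u^\eps + \tilde u^\eps$, where $\bar u^\eps$ is the projection onto Fourier modes $|k| \le N$. Taylor-expanding $g(u^\eps) \approx g(\bar u^\eps) + g'(\bar u^\eps)\,\tilde u^\eps$ (with the remainder controlled by $\|\tilde u^\eps\|_\infty = \CO(N^{-s})$ for every $s < \hf$), one writes $g(u^\eps)\,D_\eps u^\eps$ as the sum of $g(\bar u^\eps)\,D_\eps \bar u^\eps$, $g(\bar u^\eps)\,D_\eps \tilde u^\eps$, $g'(\bar u^\eps)\,\tilde u^\eps\,D_\eps \bar u^\eps$ and $g'(\bar u^\eps)\,\tilde u^\eps\,D_\eps \tilde u^\eps$. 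The first term converges to $g(u)\,\d_x u$ exactly as in the well-posed case, while the two mixed low/high-frequency terms feed only high modes and carry no mean, so they are expected to vanish on the observable scales; the whole anomaly therefore comes from the last term.

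The key step is to identify the effective small-scale statistics of $\tilde u^\eps$. For fixed $n$ the $n$-th Fourier mode of the linearisation relaxes on the time scale $(\nu n^2)^{-1}$, which tends to $0$, so the high modes equilibrate much faster than either $\bar u^\eps$ or the coefficient $f(u^\eps)$ evolve. Freezing $f(u^\eps(x,t)) \approx f(u^\eps(x_0,t_0))$ on the relevant scales, one expects $\tilde u^\eps$ near $(x_0,t_0)$ to be well approximated by $\tilde v$, the high-frequency part of the stationary solution of the \emph{additive} equation $dv = \nu\,\d_x^2 v\,dt + f(u^\eps(x_0,t_0))\,dw$. It is precisely here that the It\^o interpretation matters: it is what keeps the local quadratic variation of $u^\eps$ equal to $f^2(u^\eps)\,dt$, whereas a Stratonovich reading would generate the (formally infinite) correction mentioned just before the conjecture, and no finite limit could be expected. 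Granting this, the computation \eref{e:contrib}, carried out with $\sigma$ replaced by $f(u^\eps)$ and with $a=1$, $b=0$, shows that the low-mode content of $g'(\bar u^\eps)\,\tilde v\,D_\eps \tilde v$ converges to $-\tfrac{1}{4\nu}g'(u)f^2(u)$, which is exactly the correction term appearing in \eref{e:C5.2}.

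Assembling the pieces, one obtains $g(u^\eps)\,D_\eps u^\eps \to g(u)\,\d_x u - \tfrac{1}{4\nu}g'(u)f^2(u)$ in the relevant (distributional, or pathwise in a negative Sobolev space) topology, so that $u^\eps$ solves the corrected equation \eref{e:C5.2} up to errors that vanish as $\eps \to 0$; a fixed-point / Gronwall argument of the type used in the proof of Theorem~\ref{theo:approx}, adapted to the It\^o setting, then upgrades this into genuine convergence of $u^\eps$. The main obstacle — and the reason the statement remains a conjecture — is the rigorous justification of the frozen-coefficient reduction in the second step: one needs a quantitative averaging (homogenisation-type) estimate establishing that the small-scale fluctuations of the nonlinear, multiplicatively forced equation are genuinely governed by $f^2(u)$, uniformly enough to interchange the limit with the nonlinearity. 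Since even in the additive-noise case this averaging is argued here only heuristically, a complete proof of Conjecture~\ref{C5} would in particular contain rigorous versions of Conjectures~\ref{C1}--\ref{C3}.
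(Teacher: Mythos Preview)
Your proposal follows essentially the same heuristic route as the paper: a separation-of-scales reduction in the spirit of the argument for Conjecture~\ref{C3}, with the observation that the small-scale fluctuations now have local quadratic variation $f^2(u)$, so that replacing $\sigma$ by $f(u)$ in the computation \eref{e:contrib} yields the correction $-\tfrac{1}{4\nu}g'(u)f^2(u)$. The paper's own discussion is considerably terser --- it simply invokes the analogy with Conjecture~\ref{C3} and the expected local quadratic variation --- but the underlying idea is the same; you have merely spelled out the low/high-frequency decomposition and the frozen-coefficient reduction more explicitly.

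It is worth emphasising, however, that neither your argument nor the paper's constitutes a proof: the statement is labelled a \emph{conjecture} precisely because the averaging/freezing step you identify as the main obstacle is not justified. The paper does not attempt to close this gap; instead it tests the conjecture numerically and reports that the fit between the predicted and numerically determined correction terms is noticeably worse than in the additive-noise case (Figure~\ref{fig:multiplicative}), so even the numerical evidence is described as ``not entirely conclusive''. Your closing paragraph correctly flags this, but you should be aware that the paper itself offers no additional analytical input here beyond the one-line heuristic and the (somewhat ambiguous) simulation.
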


To test this conjecture we perform a numerical experiment, similar to
the one for conjecture~\ref{C3}; the result is shown in
figure~\ref{fig:multiplicative}.  The fit between predicted and
numerically determined correction term in
figure~\ref{fig:multiplicative} is worse than in
figure~\ref{fig:general} and thus the numerical test is not entirely
conclusive.

One possible reason is that the spatial resolution of our numerical
simulations may not be sufficient. Indeed, the argument of the
previous sections is based on a spatial averaging of the small-scale
fluctuations of the process.  In the case of multiplicative noise,
these small-scale fluctuations are themselves multiplied by a the
process $f(u)$, which is spatially quite rough. Therefore, this
spatial averaging will hold only on extremely small scales, where
$f(u)$ is essentially constant. In order to be seen by the numerical
simulation, these scales still need to be resolved at sufficient
precision to have some version of the law of large numbers.

\begin{figure}
  \begin{center}
    \includegraphics{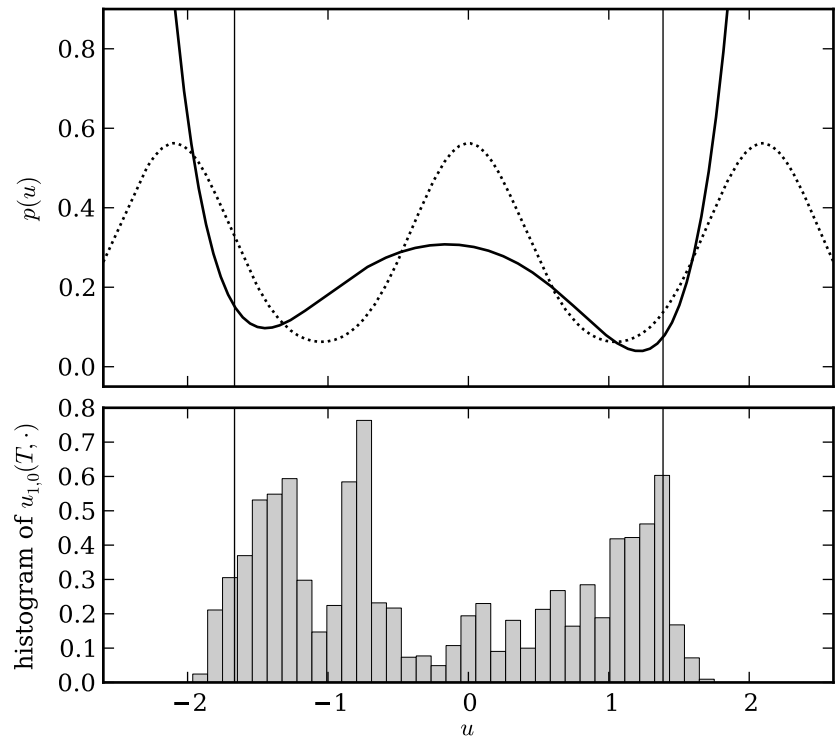}
  \end{center}
  \caption{\label{fig:multiplicative}\it Illustration of the
    convergence of \eref{e:C5.1} to~\eref{e:C5.2} in
    conjecture~\ref{C5}, for the case $g(u) = -u$ and $f(u) = 1 +
    \frac12 \cos(3u)$.  See figure~\ref{fig:general} for an
    explanation of the graphs.  Here we use a sixth-order polynomial
    $p$ to fit the correction term.  The figure shows that the fit is
    significantly worse than in figure~\ref{fig:general}.  See the
    text for a discussion of possible reasons for this effect.}
\end{figure}


\section{Small Noise/Viscosity Limit}
\label{S:viscosity limit}

One regime that is of particular interest is the small noise/small
viscosity limit.  If one takes $\nu \propto \sigma^2$ in
conjectures \ref{C1} and~\ref{C2}, one obtains a
non-vanishing correction term even for arbitrarily
small $\nu$ and $\sigma$!
It is therefore of interest to study approximations to
\begin{equ}[e:BurgEps]
  du = \eps\,\d_x^2 u\,dt - u \,\d_x u\,dt + \sqrt{\eps}\,dw
\end{equ}
for $\eps \ll 1$.

It is well-known that, in the limiting case $\eps=0$, finite
difference schemes for the Burgers equations can only be used with
extreme caution due to the presence of \textit{shocks} in the
solution.  These shocks are jump discontinuities of the solution.  The
values $u^-, u^+$ to the left/right of the jumps satisfy $u^- >
u^+$; in other words, the jumps are always downwards jumps.  For
viscosity solutions\footnote{Also called entropy solutions, these are
  the solutions that are obtained as limits of \eref{e:BurgEps} as
  $\eps \to 0$} to the inviscid Burgers equation, shocks move through
the system at velocity ${1\over 2}(u^+ + u^-)$.

What happens at the formation of a shock?  If the limiting non-viscous
Burgers equation is discretised as
\begin{equ}[e:consbad]
  \d_t u_n = - u_n \frac{u_{n+1} - u_n}{\delta},
\end{equ}
we expect the discretisation scheme to be stable only when the
discretisation is {\em upwind} in the sense that the direction of the
discretisation coincides with the direction of propagation of the
shock (see \textit{e.g.}\ \cite{Upwind,BookNA}), but even in this case
we expect the shock to propagate at the wrong speed.

Another problem is the following one: In the case of a
non-conservative discretisation of the type \eref{e:consbad}, a simple
linear stability analysis reveals that the discretised solutions
develop an {\em ultraviolet instability} (\textit{i.e.}\ the mode $v_n
= (-1)^n$ becomes unstable) in the regions where $u>0$.  Similarly,
the corresponding left-sided discretisation shows an instability in
the regions with~$u < 0$.  In contrast, for the case of a centred
discretisation, the highly oscillatory modes are stable independently
of the sign of~$u$.

\begin{remark}
  Conservative discretisations, \textit{i.e.}
  \begin{equ}
    \d_t u_n = -{1\over 2\delta}\bigl(u_{n+1}^2 - u_n^2\bigr),
  \end{equ}
  or variants thereof, still require the scheme to be upwind for
  stability, but in this case the shocks of the discretised system
  propagate at the correct speed.  Also, these schemes do not suffer
  from the ultraviolet instability.
\end{remark}

How is this picture modified for non-zero values of $\eps$?  The
instabilities discussed above grow at a speed $\CO(\delta^{-1})$ while
the stabilising effect of the viscosity is of the order $\eps
\delta^{-2}$.  Thus, one expects the viscosity to dominate only if
$\eps \gg \delta$.  Regarding the behaviour after the formation of a
shock, a simple boundary layer analysis shows that a typical shock for
\eref{e:BurgEps} has width $\CO(\eps)$, so that the caveats pointed
out above are expected to become relevant as soon as $\eps \lesssim
\delta$ (see for example \cite{EVan00} for a more sophisticated
boundary layer analysis that even goes to the next order in $\eps$).

On the other hand, at least away from shocks, the analysis performed
in section~\ref{sec:correction} holds as soon as $u$ can be
approximated by the solution to the linearised equation at
sufficiently small (but still much larger than $\delta$) spatial scales.
The $k$th mode presents spatial features on a lenghtscale of
order $k^{-1}$ and the nonlinearity essentially
propagates the solution through space
at speeds of order $1$.  Thus, the timescale on which the $k$th
mode varies due to the nonlinear effects is expected to be of
order~$k^{-1}$.  On the other hand, the relevant timescale of the
linear part for this mode is $(\eps k^2)^{-1}$ and we can conclude from this
heuristic consideration that the linearised equation is a good
approximation to the full solution for modes with $k \gg \eps^{-1}$,
\textit{i.e.}\ on space scales much smaller than~$\eps$.
Consequently, one again expects the results from
section~\ref{sec:correction} to be relevant as long as $\eps \gg
\delta$.  This leads to the following statements.

\begin{conjecture}\label{C6}
  For $\eps\ll 1$, we expect the solution to the finite difference
  approximation of \eref{e:BurgEps} to show the following behaviour.
  \begin{enumerate}
  \item For $\delta \ll \eps$, the discretised solution converges to
    the viscosity solution of
    \begin{equ}[e:BurgersCorr]
      \d_t u = -{1\over 2} \d_x (u^2) + {c\over 4},
    \end{equ}
    where $c \in \{1,0,-1\}$ depending on whether the discretisation
    is right-sided, centred, or left-sided.  Neither of the
    instabilities discussed above occur.
  \item For $\eps \ll \delta$, both viscosity and the noise term
    become irrelevant; the solution behaves like the corresponding
    approximation to the inviscid Burgers equation.  In particular, as
    long as there are no shocks and while solutions have the correct
    sign to prevent ultraviolet blow-up, we expect to converge to
    \eref{e:BurgersCorr} with $c=0$.  After the occurrence of a shock,
    one expects stability only if the scheme is upwind and, for
    discretisations of type~\eref{e:consbad}, shocks will have the
    wrong propagation speed.
  \end{enumerate}
\end{conjecture}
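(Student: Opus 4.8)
Since this is a conjecture rather than a theorem, what follows is a programme for establishing it, combining the perturbative mechanism of section~\ref{sec:correction} with classical vanishing-viscosity theory for scalar conservation laws and with classical numerical analysis of such laws; I flag throughout where the argument is genuinely heuristic.

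\textbf{The regime $\delta\ll\eps$.} The plan is to take the two limits in turn, first $\delta\to0$ at fixed $\eps$ and then $\eps\to0$. For the first step, note that \eref{e:BurgEps} is nothing but \eref{e:Burgers} with $\nu=\eps$ and $\sigma=\sqrt\eps$ (a rescaling $u(t,x)=\eps^{1/3}w(\eps^{-1/3}t,\eps^{-2/3}x)$ even reduces it literally to the $\nu=\sigma=1$ equation, at the price of a torus of size $2\pi\eps^{-2/3}$ and grid spacing $\delta\eps^{-2/3}\ll\eps^{1/3}\to0$). Conjecture~\ref{C2}, and its counterparts for the centred and left-sided schemes obtained by the same computation, then predict that the $\delta\to0$ limit is
\begin{equ}
  du = \eps\,\d_x^2 u\,dt - \hf\,\d_x(u^2)\,dt + \tfrac c4\,dt + \sqrt\eps\,dw,
  \qquad c=\tfrac{a-b}{a+b}\in\{1,0,-1\},
\end{equ}
the correction $\frac{\sigma^2}{4\nu}\frac{a-b}{a+b}=\frac14\frac{a-b}{a+b}$ being $\eps$-independent precisely because $\nu\propto\sigma^2$. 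For the second step I would pass to the limit $\eps\to0$ in this corrected SPDE: the noise $\sqrt\eps\,dw\to0$ uniformly on $[0,T]$ in $H^{-s}$ for any $s>\hf$, and the space-homogeneous source $c/4$ is harmless, so one is exactly in the setting of the vanishing-viscosity approximation of a scalar conservation law with a Lipschitz source, and Kruzhkov's theory gives convergence to the unique entropy (hence viscosity) solution of \eref{e:BurgersCorr}. That neither instability occurs in this regime is a one-line linear computation: for the right-sided scheme the mode $v_n=(-1)^n$ grows at rate $2u_n/\delta$ from the nonlinearity but is damped at rate $4\eps/\delta^2$ by the viscosity, so $\eps\gg\delta$ suppresses it (for the centred scheme this mode is neutral whatever the sign of $u$); and since a shock of \eref{e:BurgEps} has width $\CO(\eps)\gg\delta$, it is spread over many grid points and triggers no shock-type instability.

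\textbf{The regime $\eps\ll\delta$.} Now the viscous term contributes $\CO(\eps/\delta^2)$ against the advective $\CO(1/\delta)$, and the noise kicks each grid value with variance $\eps/\delta$ per unit time, so over the natural (CFL) timescale $\CO(\delta)$ both are $o(1)$ and the discrete solution tracks, to leading order, that of the \emph{deterministic} discretised inviscid Burgers equation \eref{e:consbad} or its conservative variant. Away from shocks the solution is smooth, and the section~\ref{sec:correction} mechanism now produces no correction at all: the threshold $k\gg\eps^{-1}$ above which a mode is governed by the linearised equation lies beyond the grid's Nyquist frequency $\sim\delta^{-1}$, so no resolved mode can contribute a term of the form \eref{e:contrib} and one converges to \eref{e:BurgersCorr} with $c=0$ --- provided the solution keeps the sign that prevents the ultraviolet blow-up (right-sided: $u<0$; left-sided: $u>0$; from the same growth-rate computation, now with $\eps\delta^{-2}\ll\delta^{-1}$). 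Once a shock forms the linearisation is invalid and one falls back on classical conservation-law numerics: stability requires the scheme to be upwind with respect to the direction of propagation; a conservative, consistent scheme that converges does so to a weak solution and hence with the Rankine--Hugoniot speed $\hf(u^-+u^+)$ by the Lax--Wendroff theorem; and the non-conservative scheme \eref{e:consbad} generically picks up the wrong speed, which a modified-equation or travelling-wave computation makes quantitative.

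\textbf{Where the difficulty lies.} The programme is only as strong as its perturbative input, and that input is conjectural: conjectures~\ref{C1} and~\ref{C2} are unproven, so part~1 rests on them, and one would in fact need them with uniformity in $\eps$ --- equivalently, uniformly as the rescaled torus grows and the rescaled grid shrinks --- to justify exchanging the $\delta\to0$ and $\eps\to0$ limits, and above all to say anything in the genuinely coupled borderline regime $\delta\sim\eps$. The second, deeper obstacle is in part~2: the whole of section~\ref{sec:correction} is built on approximating $u$ locally by the stationary Gaussian solution of the linear equation and averaging out the high-frequency modes by a law of large numbers, and this is exactly what collapses in an $\CO(\eps)$, respectively $\CO(\delta)$, neighbourhood of a shock; controlling the interaction between a shock layer and the small-scale fluctuations, and in particular ruling out that it feeds an $\CO(1)$ correction back into the shock speed or profile, is the point a complete proof would have to confront.
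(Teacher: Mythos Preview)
The statement is a conjecture, and the paper does not prove it: it arrives at the conjecture through the short heuristic paragraphs immediately preceding it (the instability growth rate $\CO(\delta^{-1})$ versus viscous damping $\CO(\eps\delta^{-2})$; the shock width $\CO(\eps)$; the scale comparison showing the linearisation dominates only for $k\gg\eps^{-1}$), and then supports it by the numerical experiment of figure~\ref{fig:small}. Your proposal is therefore not to be compared against a proof but against this heuristic scaffolding, and at that level it is fully compatible with and considerably more detailed than what the paper offers.

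Your two-step limit for part~1 (first $\delta\to0$ via conjecture~\ref{C2}, then $\eps\to0$ via Kruzhkov) is a natural and more explicit organisation of the same mechanism the paper invokes; the paper does not spell out the iterated limit or mention entropy-solution theory, but this is exactly what its reasoning amounts to. Your treatment of part~2 --- comparing the Nyquist cutoff $\delta^{-1}$ to the linearisation threshold $\eps^{-1}$, and then falling back on Lax--Wendroff and the classical shock-speed pathology of non-conservative schemes --- again matches the paper's one-paragraph sketch but with sharper tools named. Your closing section correctly locates the genuine obstructions (uniformity of conjectures~\ref{C1}--\ref{C2} in $\eps$, the exchange of limits, and the breakdown of the Gaussian-linearisation picture near a shock), none of which the paper attempts to address. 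In short: there is no divergence of approach to report, and no gap beyond those you have already flagged yourself; you have written out, with more care, the programme the paper only gestures at.
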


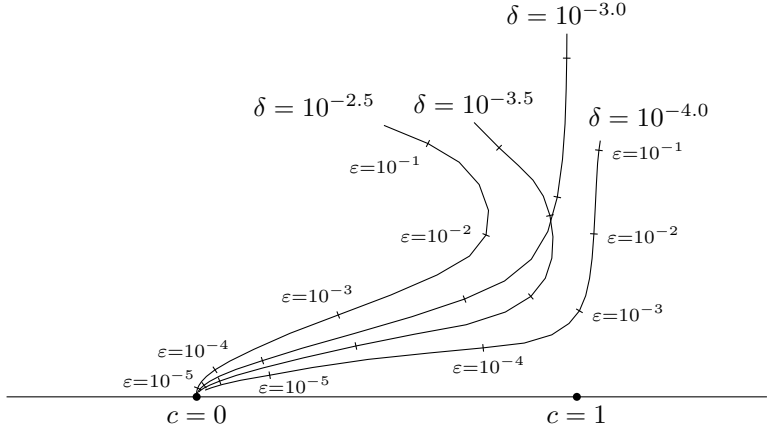
\begin{figure}
  \begin{center}
    \begin{tikzpicture}[scale=5]
\fill (0,0) circle (.3pt) node[below] {$c=1$};
\fill (-1,0) circle (.3pt) node[below] {$c=0$};
\draw[thin] (-1.5,0) -- (0.5,0);
\draw (0.048154,0.655102) -- (0.068015,0.652751) node[right] {$\scriptstyle\varepsilon=10^{-1}$};
\draw (0.034863,0.432800) -- (0.054825,0.431567) node[right] {$\scriptstyle\varepsilon=10^{-2}$};
\draw (-0.001814,0.233233) -- (0.015358,0.222980) node[right] {$\scriptstyle\varepsilon=10^{-3}$};
\draw (-0.247441,0.139859) -- (-0.245343,0.119970) node[below=-2pt,xshift=2pt] {$\scriptstyle\varepsilon=10^{-4}$};
\draw (-0.809648,0.067201) -- (-0.806121,0.047514) node[right,xshift=-8px,yshift=-2px] {$\scriptstyle\varepsilon=10^{-5}$};
\draw (-0.977826,0.017763) -- (-0.961110,0.024057) -- (-0.932407,0.032787) -- (-0.884400,0.043893) -- (-0.807884,0.057357) -- (-0.695113,0.077808) -- (-0.548932,0.099320) -- (-0.389414,0.116162) -- (-0.246392,0.129914) -- (-0.138246,0.142648) -- (-0.065867,0.164475) -- (-0.020608,0.194579) -- (0.006772,0.228106) -- (0.023278,0.268074) -- (0.033467,0.314221) -- (0.040132,0.367303) -- (0.044844,0.432184) -- (0.048389,0.500914) -- (0.051433,0.564782) -- (0.054596,0.619635) -- (0.058085,0.653926) -- (0.061688,0.679544) node[above right,xshift=-8pt,yshift=2pt]{$\delta=10^{-4.0}$};
\draw (-0.211578,0.653479) -- (-0.197586,0.667770);
\draw (-0.080451,0.478673) -- (-0.060979,0.483239);
\draw (-0.127447,0.273849) -- (-0.114458,0.258641);
\draw (-0.583180,0.144681) -- (-0.578981,0.125127);
\draw (-0.943006,0.051117) -- (-0.936077,0.032356);
\draw (-0.994155,0.013130) -- (-0.989344,0.017476) -- (-0.980774,0.023263) -- (-0.965681,0.031039) -- (-0.939541,0.041736) -- (-0.895519,0.056954) -- (-0.824714,0.076942) -- (-0.719480,0.103349) -- (-0.581081,0.134904) -- (-0.428507,0.165830) -- (-0.291095,0.191245) -- (-0.187963,0.224916) -- (-0.120952,0.266245) -- (-0.082998,0.314565) -- (-0.065748,0.367790) -- (-0.062834,0.424416) -- (-0.070715,0.480956) -- (-0.088097,0.532154) -- (-0.115286,0.574667) -- (-0.153544,0.613373) -- (-0.204582,0.660625) -- (-0.270283,0.727671) node[above]{$\delta=10^{-3.5}$};
\draw (-0.036616,0.898346) -- (-0.016617,0.898094);
\draw (-0.061122,0.531815) -- (-0.041517,0.527857);
\draw (-0.298935,0.268631) -- (-0.291689,0.249990);
\draw (-0.830043,0.105550) -- (-0.823892,0.086519);
\draw (-0.986686,0.038695) -- (-0.974460,0.022867);
\draw (-0.998108,0.009662) -- (-0.996597,0.012905) -- (-0.993894,0.017240) -- (-0.989084,0.023035) -- (-0.980573,0.030781) -- (-0.965662,0.041126) -- (-0.939918,0.054860) -- (-0.896612,0.072730) -- (-0.826968,0.096034) -- (-0.723204,0.128771) -- (-0.586836,0.167585) -- (-0.433738,0.212557) -- (-0.295312,0.259311) -- (-0.190238,0.307204) -- (-0.119722,0.364978) -- (-0.076262,0.438656) -- (-0.051320,0.529836) -- (-0.037771,0.629299) -- (-0.031011,0.724161) -- (-0.028022,0.816959) -- (-0.026617,0.898220) -- (-0.026180,0.963114) node[above]{$\delta=10^{-3.0}$};
\draw (-0.387242,0.680780) -- (-0.396097,0.662847) node[below left,xshift=2pt] {$\scriptstyle\varepsilon=10^{-1}$};
\draw (-0.229537,0.425375) -- (-0.248020,0.433014) node[left] {$\scriptstyle\varepsilon=10^{-2}$};
\draw (-0.623975,0.207832) -- (-0.631128,0.226509) node[above left,xshift=10pt] {$\scriptstyle\varepsilon=10^{-3}$};
\draw (-0.946295,0.063307) -- (-0.957327,0.079989) node[above left,xshift=10pt] {$\scriptstyle\varepsilon=10^{-4}$};
\draw (-0.988075,0.019239) -- (-1.006630,0.026704) node[left,yshift=3pt,xshift=4pt] {$\scriptstyle\varepsilon=10^{-5}$};
\draw (-1.000414,0.007278) -- (-1.000323,0.009703) -- (-0.999986,0.012935) -- (-0.999146,0.017240) -- (-0.997352,0.022972) -- (-0.993774,0.030594) -- (-0.986947,0.040712) -- (-0.974346,0.054087) -- (-0.951811,0.071648) -- (-0.912998,0.094658) -- (-0.849583,0.126010) -- (-0.754214,0.168593) -- (-0.627551,0.217170) -- (-0.489258,0.270076) -- (-0.367444,0.323291) -- (-0.282382,0.373277) -- (-0.238779,0.429195) -- (-0.232333,0.494383) -- (-0.256936,0.563024) -- (-0.309570,0.622297) -- (-0.391669,0.671813) -- (-0.507342,0.719956) node[above left]{$\delta=10^{-2.5}$};
\end{tikzpicture}
  \end{center}
  \caption{\label{fig:small}\it Illustration of the limiting behaviour
    of a right-sided finite difference discretisation
    for~\eref{e:BurgEps} as $\eps\downarrow 0$ (for fixed $\delta$).
    Distances are shown in ``two-centre bipolar coordinates''
    as described in the text.
    One can see that, as $\eps$ gets small, the finite difference
    discretisation first gets close to the solution
    of~\eref{e:BurgersCorr} for $c=1$ but then finally converges to
    the solution with $c=0$ as $\eps$ gets smaller than~$\delta$.
    This behaviour reflects the dichotomy between the two cases of
    conjecture~\ref{C6}.}
\end{figure}

To test this conjecture we again perform a numerical experiment.
Keeping in line with the topic of this article, we focus on studying
how the presence of the extra $c/4$ term in~\eqref{e:BurgersCorr} is
affected by the values $\eps, \delta>0$.  For our experiment, we first
solve the limiting equation \eqref{e:BurgersCorr} numerically up to a
time $t>0$, once with $c=0$ and once with $c=1$, to get states
$u_0(t), u_1(t) \in L^2\bigl([0,2\pi], \R\bigr)$.  Now, for given
$\eps,\delta>0$, we solve the right-sided finite difference
discretisation~\eref{e:approxN} for~\eref{e:BurgEps} to get a solution
$u_{\eps,\delta}(t)$.  According to conjecture~\ref{C6} we expect
$u_{\eps,\delta}(t) \approx u_1$ for $\delta \ll \eps \ll 1$ and
$u_{\eps,\delta}(t) \approx u_0$ for $\eps \ll \delta$.

To verify this conjecture, in figure~\ref{fig:small} we consider, for
fixed $\delta$, the solution $u_{\eps,\delta}(t)$ as a function
of~$\eps$.  The coordinate system is chosen such that the (two
dimensional) distance of $u_{\eps,\delta}(t)$ to the point $c=0$ in
the graph equals $\bigl\|u_{\eps,\delta}(t) - u_0\|_{L^2}$; similarly
the distance of $u_{\eps,\delta}(t)$ to the point $c=1$ in the graph
equals $\bigl\|u_{\eps,\delta}(t) - u_1\|_{L^2}$ (this is sometimes
called ``two-centre bipolar coordinates'').  The simulation indicates
that the transition from $c = 1$ to $c=0$ takes place at around $\eps
\approx \delta$ as expected.


\begin{appendix}

\section{Simulations}
\label{S:simulate}

To verify the heuristic arguments presented above, the text utilises a
series of numerical results.  This appendix summarises some of the
technical aspects of these simulations.\footnote{For full details we
  refer to the source code of the programs used in these simulations,
  which is available for download at
  \url{http://seehuhn.de/programs/HairerVoss10} .}

We first describe how to implement the finite difference schemes used
to discretise SPDEs like \eref{e:Burgers}, \eref{e:BurgersGen}
and~\eref{e:multBurgers}: For the space discretisation of these
equations we approximate states $u\in L^2\bigl([0,2\pi], \R\bigr)$ by
vectors $u^N\in\R^N$.  The space discretisation of the differential
operators given by formula~\eref{e:burgersFD}, above.  The
finite difference discretisation of the white noise process $w$ is
$W^N/\sqrt{\Delta x}$, where $W^N$ is a standard Brownian motion on
$\R^N$ and $\Delta x = \delta = 2\pi/N$ is the space grid size.  This
leads to $\R^N$-valued stochastic differential equations of the form
\begin{equ}
  du^N = \nu L_N u^N \,dt + F_N(u^N)\,dt + \sigma(u^N)\,\sqrt{\frac{1}{\Delta x}} dW^N(t)
\end{equ}
where $L_N\in\R^{N\times N}$ is the discretisation of the linear part
and $F_N\colon \R^N\to \R^N$ is the discretisation of the
nonlinearity.

For discretising time we use the $\theta$-method
\begin{equs}
  u^{(n+1)}
  &= u^{(n)}
     + \nu L_N \bigl(\theta u^{(n+1)} + (1-\theta) u^{(n)}\bigr) \,\DT \\
    &\hskip15mm
     + F_N\bigl(u^{(n)}\bigr) \,\DT
     + \sigma(u^{(n)}) \sqrt{\frac{\DT}{\DX}} \,\xi^{(n)},
\end{equs}
where $\Delta t>0$ is the time step size, $u^{(n)}$ is the discretised
solution at time $n\,\Delta t$, the $\xi^{(n)}$ are i.i.d.,
$N$-dimensional standard normally distributed random variables, and
$\theta=1/2$.  Rearranging this equation gives
\begin{equs}[e:theta-step]
  \bigl( I - \nu \theta \DT L_N \bigr) u^{(n+1)}
  &= \bigl( I + \nu (1-\theta) \DT L_N \bigr) u^{(n)} \\
  &\hskip15mm
     + F_N(u^{(n)}) \,\DT
     + \sigma(u^{(n)}) \sqrt{\frac{\DT}{\DX}} \,\xi^{(n)}.
\end{equs}
Relation \eref{e:theta-step} allows to compute $u^{(n+1)}$ from
$u^{(n)}$; since $I - \nu \theta \DT L_N$ is cyclic tridiagonal, this
system can be solved efficiently.

Since we are using the partially implicit $\theta$-method for the
linear part of the SDE, there are no constraints on the time step size
$\DT$ arising from this term; on the other hand, since the non-linear
transport term is treated explicitly, one has to make sure that $v
\DT/\DX < C$, where $v$ is the largest speed of propagation appearing
in the solution and $C$ is the Courant number, thus ensuring that the
CFL condition is satisfied.  The resulting method can be used to
perform the simulations required to generate figures
\ref{fig:solutions}, \ref{fig:strange} and~\ref{fig:small}, as well as
the ``finite difference'' curve in figure~\ref{fig:burgers}.

As described in section~\ref{sec:correction}, we can compute a
spectral Galerkin approximation to the second order differential operator using
discrete Fourier transform.  This corresponds to replacing $L_N$
in~\eref{e:theta-step} with
\begin{equ}
  \tilde L_N = \CF^{-1} D \CF
\end{equ}
where $D = \diag\bigl(-0^2, -1^2, \ldots, -\lfloor N/2\rfloor^2\bigr)$
and $\CF\colon \R^N\to \C^{\lfloor N/2\rfloor+1}$ represents the discrete
Fourier transform (since the data is real, only $\lfloor N/2\rfloor+1$
of the Fourier coefficients need to be considered).  Because the
matrix $\tilde L_N$ is no longer tridiagonal, one should not try to
explicitly construct this matrix.  Instead one can use the fact that $\CF$
and $\CF^{-1}$ can be computed efficiently: we can compute the
right-hand side of \eref{e:theta-step} using
\begin{equ}
 \bigl( I + \nu (1-\theta) \DT L_N \bigr) u^{(n)}
 = \CF^{-1} \diag\Bigl( 1 - \nu (1-\theta) \DT\, k^2,\;
                       k=0, \ldots, \lfloor N/2\rfloor^2 \Bigr) \CF\, u^{(n)}
\end{equ}
and to solve \eref{e:theta-step} for $u^{(n+1)}$ we can use the
relation
\begin{equ}
 \bigl( I - \nu \theta \DT L_N \bigr)^{-1} b
 = \CF^{-1} \diag\Bigl( \frac{1}{1 + \nu \theta \DT\, k^2},\;
                       k=0, \ldots, \lfloor N/2\rfloor^2 \Bigr) \CF\, b.
\end{equ}
This technique allows to obtain the ``Galerkin'' curve in
figure~\ref{fig:burgers}.  Similarly, the rougher-than-white noise for
figure~\ref{fig:rough} was obtained by replacing the noise term
$\xi^{(n)}$ with
\begin{equ}
  \tilde\xi^{(n)} = \CF^{-1} \diag\bigl((1+k^2)^{-\delta}, \;
                      k=0, \ldots, \lfloor N/2\rfloor \bigr) \CF \, \xi^{(n)}.
\end{equ}

Finally, for the minimisation procedure performed to generate figures
\ref{fig:general} and~\ref{fig:multiplicative} we employ the simplex
algorithm by Nelder and Mead~\cite{NeMea65}.

\end{appendix}

\bibliographystyle{jochen}
\bibliography{./refs}
\end{document}